\documentclass[12pt,reqno]{amsart}

\usepackage[utf8]{inputenc}
\usepackage{amssymb}
\usepackage{latexsym}
\usepackage{amsmath, amscd}
\usepackage[alphabetic]{amsrefs}

\usepackage{mathrsfs}
\usepackage{dsfont}
\usepackage{MnSymbol}
\usepackage{marvosym}

\usepackage{graphicx}
\usepackage{graphics}
\usepackage{subcaption}
\usepackage{wrapfig}

\usepackage{tikz}
\usepackage{tikz-cd}
\usepackage[all]{xy}

\usepackage{enumerate}
\usepackage{enumitem}
\usepackage{arydshln}

\usepackage{color}
\usepackage{xcolor}
\usepackage{verbatim}
\usepackage{soul}
\usepackage{ulem}
\usepackage{extarrows}

\usepackage[margin=1in]{geometry}

\usepackage{hyperref}
\definecolor{rose}{rgb}{0.82, 0.1, 0.26}
\definecolor{blu}{rgb}{0.36, 0.54, 0.66}
\definecolor{mor}{rgb}{0.55, 0.0, 0.55}

\hypersetup{
  colorlinks=true,
  linkcolor=rose,
  citecolor=mor,
  filecolor=blue
}

\usepackage{cleveref}

\theoremstyle{plain}
\newtheorem{theorem}{Theorem}[section]
\newtheorem{thm}[theorem]{Theorem}
\newtheorem{cor}[theorem]{Corollary}
\newtheorem{lem}[theorem]{Lemma}
\newtheorem{prop}[theorem]{Proposition}

\newtheorem{defi}[theorem]{Definition}

\theoremstyle{definition}

\newtheorem{ques}[theorem]{Question}

\newtheorem{rmk}[theorem]{Remark}

\theoremstyle{remark}

\theoremstyle{definition}
\newtheorem*{thm1}{Theorem 1}

\newtheorem*{coro}{Corollary}

\newcommand{\PP}{\mathds{P}}
\newcommand{\PPP}{\PP^{3}}

\newcommand{\ii}{\mathscr{I}}
\newcommand{\oo}{\mathcal{O}}

\title{The classification of ACM curves on a surface in $\PPP$}

\author{Abel Castorena}
\address{Centro de Ciencias Matemáticas, Universidad Nacional Autónoma de México, Morelia, Michoacán}
\email{abel@matmor.unam.mx}

\author{Monserrat Vite}
\address{Centro de Ciencias Matemáticas, Universidad Nacional Autónoma de México, Morelia, Michoacán}
\email{montserrat@matmor.unam.mx}
\begin{document}
\maketitle
\begin{abstract}
We classify ACM curves contained in a surface of degree $d$ in $\PPP$ in terms of weak admissible pairs. In the case of a very general smooth determinantal quartic surface, we provide a geometric description of these curves and compute their Picard classes on the surface. Finally, we present a generalization to ACM closed subvarieties of codimension $1$ on a hypersurface in $\PP^{n}$.
\end{abstract}
\section{Introduction}

Arithmetically Cohen--Macaulay curves (ACM curves, for short) play a central role in algebraic geometry due to their rich structure and favorable cohomological properties. They are defined as projective curves whose homogeneous coordinate ring is Cohen--Macaulay. ACM curves generalize the notion of complete intersections and form a natural class of objects for the study of liaison theory. For example, Apéry and Gaeta showed that a smooth curve $C$ in $\PPP$ lies in the linkage class of a complete intersection if and only if it is an ACM curve (\cite{apery}, \cite{gaeta}).\\

Moreover, ACM curves with a fixed minimal free resolution form an open smooth subset of the corresponding Hilbert scheme, whose dimension can be computed directly from the Betti numbers of the resolution \cite{ell}. The degree and genus are also determined by these numbers \cite{pesk}. In addition, the Hilbert--Burch Theorem states that a minimal set of generators of the ideal of an ACM curve is given by the minors of a matrix of size $(t+1)\times t$ \cite{har3}*{Prop. 8.7}. This theorem plays a fundamental role in the present work.\\

Regarding the classification of ACM curves, a well-known result for a smooth quadric surface $Q$ states that a curve of type $(s,t)$ on $Q$ is ACM if and only if $|s-t|\leq 1$ \cite{har2}*{III, Ex. 5.6}. In \cite{mwa}, a complete description of ACM curves on a smooth cubic surface is given, and in \cite{kwa} the author provides a classification of initialized ACM line bundles on smooth quartic surfaces in $\PPP$. A limitation of these results is that the smoothness hypothesis plays a crucial role.\\

The main purpose of this paper is to classify all ACM curves (not necessarily smooth or irreducible) contained in an arbitrary surface of degree $d$ in $\PPP$. The idea is the following: we introduce the notion of a weak determinantal surface (see Definition~\ref{defaddpair}), which generalizes the definition of a determinantal surface given in \cite{LLV}*{Def. 2}. We prove that surfaces that are not weak determinantal contain only ACM curves that are complete intersections. Furthermore, this definition allows us to determine all possible minimal free resolutions of ACM curves contained in a weak determinantal surface.\\

The main theorem of this paper is the following:
\begin{thm1}[Classification of ACM curves on a surface of degree $d$ in $\PPP$] \label{teoprinc}
Let $X=V(F)$ be a surface of degree $d$ in $\PPP$, and let $C\subseteq X$ be a curve. Then $C$ is ACM if and only if one of the following cases occurs:
\begin{enumerate}
    \item The surface $X$ is not weak determinantal, and $C$ is a complete intersection.

    \item The surface $X$ is weak determinantal of type $\{(\hat{a}_{i},\hat{b}_{i})\}_{i\in I}$ and only of these types (see Definition~\ref{defaddpair}). In this case, $C$ satisfies one of the following:
    \begin{enumerate}[label=(\roman*)]
        \item $C$ is a complete intersection.
        \item If $C$ is not a complete intersection and $F$ is a minimal generator of the ideal of $C$, then $C$ has the following minimal free resolution:
        $$0\to \bigoplus_{j=1}^{t}\oo_{\PPP}(-(b_{j}+k))\to \bigoplus_{i=1}^{t}\oo_{\PPP}(-(a_{i}+k))\oplus \oo_{\PPP}(-d)\to \ii_{C}\to 0,$$
        where $(\hat{a}_{j}=(a_{1},\ldots ,a_{t}),\hat{b}_{j}=(b_{1},\ldots ,b_{t}))$ for some $j\in I$ and some $k\in\mathds{Z}$.
        \item If $C$ is not a complete intersection and $F$ is not a minimal generator of the ideal of $C$, then $C$ has the following minimal free resolution:
        $$0\to \bigoplus_{i=1,i\neq j_{0}}^{t}\oo_{\PPP}(-d+b_{j_{0}}-b_{i})\to \bigoplus_{i=1}^{t}\oo_{\PPP}(-d+b_{j_{0}}-a_{i})\to \ii_{C}\to 0,$$
        where $(\hat{a}_{j}=(a_{1}+k,\ldots ,a_{t}+k),\hat{b}_{j}=(b_{1}+k,\ldots ,b_{j_{0}-1}+k,d,b_{j_{0}+1}+k,\ldots ,b_{t}+k))$ for some $j\in I$, $j_{0}\in\{1,\ldots ,t\}$, and some $k\in\mathds{Z}$.
    \end{enumerate}
\end{enumerate}
\end{thm1}

As an application, we compare our results with the known cases of smooth surfaces of degrees two and three. We also compute all possible weak admissible pairs of degree $4$ (see Definition~\ref{def::admissiblePair}) and determine which of them define smooth weak determinantal surfaces. In particular, we prove the following result:
\begin{coro}[Corollary \ref{Xsuave}]
If $X$ is a smooth weak determinantal surface of degree $4$, then $X$ is determinantal or of type $((0,0,1),(1,2,2))$.
\end{coro}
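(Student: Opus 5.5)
The plan is to reduce the corollary to a finite check. First, list every weak admissible pair of degree $4$ allowed by Definition~\ref{def::admissiblePair}. Then show that every pair other than the determinantal (linear) ones and $((0,0,1),(1,2,2))$ forces the surface $X=V(\det M)$ to be singular.

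\textbf{Enumeration.} Since a twist $\hat a\mapsto \hat a+k$, $\hat b\mapsto\hat b+k$ does not change the degrees $b_j-a_i$ of the entries of $M$, I would normalize $a_1=0$ and order $a_1\le\cdots\le a_t$, $b_1\le\cdots\le b_t$. The degree condition $\sum_i (b_i-a_i)=4$ bounds the size of the entries. I would also discard pairs for which a row or column of $M$ is forced to be zero, since then $F\equiv 0$. Pairs that differ only by deleting a row and column with a forced nonzero constant entry are redundant. This leaves a short explicit list with $t\le 4$, and the case $t=4$ with all $b_j-a_i=1$ is the determinantal one.

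\textbf{Singularity criterion.} The tool is the standard identity
\[
\partial F/\partial x_\ell=\operatorname{tr}\bigl(\operatorname{adj}(M)\,\partial M/\partial x_\ell\bigr).
\]
It shows that $X$ is singular at every point where $\operatorname{rank} M\le t-2$, because there all $(t-1)$-minors vanish. For each remaining pair I would exhibit such points, using the degree constraints. Whenever $b_j-a_i<0$ the entry is zero, so $M$ contains a zero block. There are two ways this produces singularities:
\begin{enumerate}[label=(\alph*)]
\item The zero block can make $M$ block triangular. Then $F$ factors as $F=GH$ with $\deg G,\deg H>0$, and $X$ is singular along the curve $V(G,H)\neq\emptyset$.
\item Alternatively, an $r\times s$ zero block with $r+s\ge t$ leaves few nonzero entries in the complementary rows. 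Rank $\le t-2$ is then imposed by the vanishing of at most three forms, and such a locus is nonempty in $\PPP$ by dimension count. A typical instance is a row consisting of entries of the same degree that vanish simultaneously together with one more minor.
\end{enumerate}
I would go through the list pair by pair, applying (a) or (b).

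\textbf{Sharpness.} For the surviving pair $((0,0,1),(1,2,2))$, the matrix has a linear entry, two quadric entries in each of the first two rows, and linear entries in the last row; no entry is forced to vanish. I would check that a general choice of entries gives a smooth quartic, for example by a Bertini-type argument on the linear system of such determinants, or by an explicit example. This shows the type genuinely occurs, although the corollary itself only needs the exclusion direction.

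\textbf{Main obstacle.} I expect the main difficulty in the pairs where no entry is forced to be zero but some entries have degree $0$. In these cases neither (a) nor (b) applies directly. I would first try to use the constant entries to perform row and column operations that reduce $M$ to a smaller matrix of a pair already on the list, which shows the pair is equivalent to a determinantal one or to $((0,0,1),(1,2,2))$. If that reduction fails, I would count dimensions to show that the rank $\le t-2$ locus has codimension at most $3$, and hence meets $\PPP$.
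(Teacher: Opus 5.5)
Your overall strategy is the paper's: list the degree-$4$ weak admissible pairs, then discard the bad ones through the block-triangular factorization (a). You are also right that only the exclusion direction is needed.

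\textbf{The gap.} The proposal rests on two misreadings of the definitions, and with them the plan aims at a false claim.

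First, \emph{determinantal} in this paper means $a_i<b_j$ for all $i,j$ (remark after Definition~\ref{defaddpair}). It is not just the linear type $((1,1,1,1),(2,2,2,2))$. In degree $4$ it also includes $((1,1),(2,4))$, $((1,3),(4,4))$, $((1,2),(3,4))$, $((1,1),(3,3))$, $((1,1,1),(2,2,3))$ and $((1,2,2),(3,3,3))$. Together with the linear type these give the five divisors $\mathcal{F}_1,\dots,\mathcal{F}_5$ of Theorem~\ref{thm::quarticDivisors}. Your plan requires every surface of these six types to be singular, which is false: a general quartic containing a line is smooth and of type $((1,1),(2,4))$. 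Your tools could not reach these pairs anyway. They have no zero entries, so (a) and (b) do not apply. The fallback dimension count goes the wrong way: for a general such matrix the locus $\operatorname{rank}\le t-2$ has codimension $4$, hence is empty in $\PPP$. That is consistent with the general members of these types being smooth.

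Second, Definition~\ref{defaddpair} sets $m_{ij}=0$ whenever $a_i\ge b_j$. So entries of degree $0$ are zero, not constants. Your ``main obstacle'' therefore never occurs. Your description of $((0,0,1),(1,2,2))$ is also wrong: its $(3,1)$ entry is forced to vanish (this is $M_{18}$). A nonzero constant there could be eliminated, leaving a $2\times 2$ matrix of quadrics, which is a different type.

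\textbf{The fix.} With the correct conventions, (a) alone finishes the proof.

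If $b_i\le a_{i+1}$ for some $i$, then $b_q\le b_i\le a_{i+1}\le a_p$ for all $q\le i<p$. So the lower-left $(t-i)\times i$ block is zero, and $F=\det(N_1)\det(N_2)$ with factors of positive degrees $\sum_{k\le i}(b_k-a_k)$ and $\sum_{k>i}(b_k-a_k)$. Hence $X$ is reducible, and singular where its components meet.

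If instead $b_i>a_{i+1}$ for every $i$, then every entry on or above the diagonal, and every subdiagonal entry, is positive. Normalize $a_1=0$; positivity of the diagonal gives $t\le 4$. Run through the compositions of $4$ as the diagonal $(b_i-a_i)$:
\begin{enumerate}[label=(\roman*)]
\item for $t=2$ and $t=4$, every entry is positive;
\item for $t=3$, the only possible nonpositive entry is $b_1-a_3$, and this happens only for the diagonal $(1,2,1)$, which gives $((0,0,1),(1,2,2))$.
\end{enumerate}
Thus $X$ is determinantal or of type $((0,0,1),(1,2,2))$. This is exactly what the paper's list $M_1,\dots,M_{28}$, combined with item (1) of Corollary~\ref{Xsuave}, amounts to.
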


Since the Picard group of a very general determinantal surface is known, we provide a complete geometric description of the ACM curves contained in a very general determinantal surface.\\

\textbf{Organization:} Section~\ref{sec::1} contains the definition of a weak determinantal surface and the proof of Theorem~\ref{teoprinc}. Section~\ref{sec::2} is devoted to the study of the known cases. Section~\ref{sec::3} describes all possible weak admissible pairs of degree $4$ and, in the case of a smooth very general determinantal quartic surface $X$, provides a complete description of the ACM curves contained in $X$. Finally, Section~\ref{sec::4} presents a generalization of the main theorem to higher dimensions.\\

\textbf{Acknowledgments:} We thank César Lozano Huerta for giving us helpful feedback and the second author thanks Manuel Leal for his support with the more technical details of this writing. The first author is supported with research grant PAPIIT IN101226 ``Teoría de Brill-Noether y aplicaciones" from UNAM. Second author is a SECIHTI fellow at Centro de Ciencias Matemáticas (UNAM), México.

\section{ACM curves on a surface of degree $d$} \label{sec::1}

Let $D$ be a nonzero effective divisor on a surface $X$ of degree $d$ in $\PPP$. Then, from the exact sequence
\begin{equation} \label{sucideal}
    0 \to \oo_{\PPP}(-d)\to \ii_{D} \to \oo_{X}(-D)\to 0,
\end{equation}
the line bundle $\oo_{X}(D)$ is ACM if and only if $D$ is an ACM curve, where $\ii_{D}$ denotes the ideal sheaf of $D$ in $\PPP$. We can say more even when $D$ is not an ACM curve by relating it to linked curves.

\begin{defi}
Two equidimensional curves $D_{1}$ and $D_{2}$ in $\PP^{3}$ are \textit{directly geometrically linked} (or simply linked) by the complete intersection of two surfaces $S$ and $S^{\prime}$ if they have no components in common and $D_{1}\cup D_{2}=S\cap S^{\prime}$. In this case, we usually say that $D_{2}$ is \textit{residual} to $D_{1}$.
\end{defi}

We denote by $M_{D}:=\bigoplus_{n\in\mathds{Z}} H^{1}(\PPP,\ii_{D}(n))$ the Rao module of $D$ (see \cite{mdp2}*{I, Def. 3.1}). One relation between the Rao module and linked curves is that the Rao module $M_{D}$ is invariant under linkage, up to duals and shifts in the grading \cite{mdp2}*{III, Prop. 1.2}.\\

A relation between two curves lying in the same Picard class on a surface $X$ in $\PPP$ is given by the following proposition.

\begin{prop}\label{propclases}
Let $D_{1}$ and $D_{2}$ be two curves contained in a surface $X$ in $\PPP$. If $D_{1}$ and $D_{2}$ have the same class in $\mathrm{Pic}(X)$, then $M_{D_{1}}\cong M_{D_{2}}$. Moreover, there exists a curve $B$ such that each $D_{i}$ is linked to $B$ by the complete intersection of $X$ with a surface of degree $n$ \cite{mdp2}*{III, Def. 1.1}.
\end{prop}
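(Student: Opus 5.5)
Since $D_{1}\sim D_{2}$ on $X$, the plan is to construct a single curve $B$ linked to both $D_{1}$ and $D_{2}$ by complete intersections of $X$ with hypersurfaces of the same degree $n$. The isomorphism $M_{D_{1}}\cong M_{D_{2}}$ then follows from the invariance of the Rao module under linkage \cite{mdp2}*{III, Prop. 1.2}. If $D_{1}$ and $B$ are linked, then $M_{B}\cong M_{D_{1}}^{\vee}(m)$ for a shift $m$ depending only on the degrees of the linking surfaces, here $d$ and $n$. The same holds for $D_{2}$ with the same shift. Dualizing the finite-length graded modules gives $M_{D_{1}}\cong M_{D_{2}}$, with no residual shift.

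To construct $B$, first work on $X$ with the sequence \eqref{sucideal}. Twisting by $n\gg0$, Serre vanishing gives $H^{1}(\ii_{D_{1}}(n))\to H^{1}(\oo_X(-D_1)(n))$ and the surjection $H^{0}(\ii_{D_{1}}(n))\twoheadrightarrow H^{0}(\oo_{X}(nH-D_{1}))$. Since $D_{1}\sim D_{2}$, we have $\oo_{X}(nH-D_{1})\cong\oo_{X}(nH-D_{2})$. For $n$ large this line bundle is globally generated and its general section defines an effective divisor $B\in|nH-D_{1}|=|nH-D_{2}|$. Lift the section to forms $G_{i}\in H^{0}(\ii_{D_{i}}(n))$, $i=1,2$. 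Then $X\cap V(G_{i})=D_{i}+B$ as divisors on $X$, so $D_{i}$ and $B$ are linked by $V(F)\cap V(G_{i})$, both of type $(d,n)$.

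The main obstacle is that $X$ may be singular or non-normal, so Weil divisors, Cartier divisors and $\mathrm{Pic}(X)$ must be handled with care. The same issue affects the requirement that $B$ share no components with $D_{i}$ (geometric linkage). I would interpret the curves as effective Cartier divisors, which is implicit since $\oo_X(-D)$ appears in \eqref{sucideal}. I would then use a Bertini-type argument, a general section of the very ample-twisted bundle vanishing on no component of $D_1\cup D_2$, to ensure no common components. If geometric linkage fails, it is enough to use algebraic linkage in the sense of \cite{mdp2}*{III, Def. 1.1}, where Rao module invariance still holds. As an alternative direct route to the first claim, one can compute $M_{D}$ from \eqref{sucideal}. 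Since $H^{1}(\oo_{\PPP}(k))=H^{2}(\oo_{\PPP}(k))=0$, the long exact sequence gives $M_{D}\cong\bigoplus_{k}H^{1}(X,\oo_{X}(kH-D))$, which depends only on the class of $D$. This gives the isomorphism of graded $S$-modules immediately and serves as a cross-check.
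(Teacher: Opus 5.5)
Your proof is correct. Your construction of $B$ is the paper's, but your main route to $M_{D_1}\cong M_{D_2}$ is different.

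The paper proves the two claims independently. The isomorphism of Rao modules comes straight from \eqref{sucideal}: since $H^1$ and $H^2$ of every twist of $\oo_{\PPP}$ vanish, $H^1(\ii_{D}(n))\cong H^1(\oo_X(nH-D))$ for every $n$. This is exactly the ``cross-check'' in your last paragraph. Only afterwards does the paper pick $B\in|nH-A|$ for $n\gg0$.

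You reverse the logic: you build $B$ first and deduce the first claim from Rao's duality under linkage. The shift depends only on $d+n$, so it is the same for both links and cancels after taking graded duals. This is legitimate, because your curves are effective Cartier divisors on the Cohen--Macaulay surface $X$. They are therefore locally Cohen--Macaulay with finite-length Rao modules, so the double-dual step is valid. But the route is heavier than needed: it uses the duality theorem, finite length, and the whole construction of $B$. The direct computation is one line, needs no linkage, and visibly gives an isomorphism of graded modules over the polynomial ring. I would make the direct computation the proof of the first claim.

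Conversely, your treatment of the second claim is more complete than the paper's. The paper only asserts that $|nH-[B]|$ ``corresponds to surfaces of degree $n$ containing $B$''. You make this explicit by lifting a section through $H^0(\ii_{D_i}(n))\to H^0(\oo_X(nH-D_i))$. That map is surjective for every $n$ because $H^1(\oo_{\PPP}(n-d))=0$, so Serre vanishing is not needed there, and your sentence about a map on $H^1$ can be dropped. You also choose $B$ as a general section of a globally generated line bundle, so that it contains no component of $D_1\cup D_2$ or of $X$. Geometric linkage requires this, and the paper's ``very ample, hence there exists $B$'' leaves it unaddressed.
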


\begin{proof}
Since $[D_{1}]=[D_{2}]$ in $\mathrm{Pic}(X)$, we have $\oo_{X}(-D_{1})\cong \oo_{X}(-D_{2})$. By the exact sequence \eqref{sucideal}, this yields
\[
H^{1}(\PPP, \ii_{D_{1}}(n))\cong H^{1}(X, \oo_{X}(nH-D_{1}))
\cong H^{1}(X,\oo_{X}(nH-D_{2}))
\cong H^{1}(\PPP, \ii_{D_{2}}(n)).
\]
Therefore,
\[
M_{D_{1}}=\bigoplus_{n\in\mathds{Z}} H^{1}(\PPP,\ii_{D_{1}}(n))
\cong
\bigoplus_{n\in\mathds{Z}} H^{1}(\PPP,\ii_{D_{2}}(n))
= M_{D_{2}}.
\]

For the second statement, let $H$ denote the hyperplane section class of $X$ in $\mathrm{Pic}(X)$ and set $A=[D_{1}]=[D_{2}]$. For $n\gg0$, the class $nH-A$ is very ample, hence there exists a curve $B\in |nH-A|$. Since $D_{1},D_{2}\in |A|=|nH-[B]|$, and this class corresponds to surfaces of degree $n$ containing $B$, the result follows.\\
\end{proof}

We now recall the definition of an admissible pair from \cite{LLV}*{Def. 1}, with a slight modification in the third condition.

\begin{defi}\label{def::admissiblePair}
Consider two sequences of integers $\hat{a}=(a_1,\ldots,a_t)$ and $\hat{b}=(b_1,\ldots,b_t)$ in $\mathbb{Z}^t$, with $t\geq2$, satisfying:
\begin{enumerate}[label=(\roman*)]
    \item $a_1\leq a_2\leq\cdots\leq a_t$,
    \item $b_1\leq b_2\leq\cdots\leq b_t$,
    \item $a_i<b_i$ for all $1\leq i\leq t$.
\end{enumerate}
We say that $(\hat{a},\hat{b})$ is a \textit{weak admissible pair} of degree $d:=\sum_{i=1}^{t}(b_i-a_i)$ and length $t$. Two weak admissible pairs $(\hat{a},\hat{b})$ and $(\hat{a}',\hat{b}')$ are said to be \textit{equivalent} if there exists $k\in\mathbb{Z}$ such that $a_i'=a_i+k$ and $b_i'=b_i+k$ for all $1\leq i\leq t$.
\end{defi}

Using this definition, we introduce the notion of a weak determinantal surface.

\begin{defi} \label{defaddpair}
Let $(\hat{a}, \hat{b})$ be a weak admissible pair of degree $d$ and length $t$. A surface $X=V(F)\subset\PP^3$ is called a \textit{weak determinantal surface} of type $(\hat{a}, \hat{b})$ if $F=\det(S)$ for some matrix $S=(m_{ij})$, where $m_{ij}$ is a homogeneous polynomial of degree $b_j-a_i$. In the case $a_i\geq b_j$, we assume $m_{ij}=0$.
\end{defi}

\begin{rmk}
Observe that if $a_i<b_j$ for all $i,j\in\{1,\ldots,t\}$, we recover the definition of a determinantal surface given in \cite{LLV}*{Def. 1, Def. 2}. In particular, every determinantal surface is weak determinantal.
\end{rmk}

The relevance of this class of surfaces for our work is given by the following proposition.

\begin{prop} \label{propnotdet}
Let $X=V(F)\subseteq \PPP$ be a surface of degree $d$. If there exists an ACM curve $C\subseteq X$ that is not a complete intersection, then $X$ is a weak determinantal surface of type $(\hat{a}, \hat{b})$ for some weak admissible pair $(\hat{a}, \hat{b})$ of degree $d$ and length $t$.
\end{prop}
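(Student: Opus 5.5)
The plan is to use the Hilbert--Burch theorem \cite{har3}*{Prop. 8.7} for the ACM curve $C$ and then to write $F$, which lies in the homogeneous ideal $I_C$, as a combination of the maximal minors of the Hilbert--Burch matrix. Since $C$ is ACM of codimension two, its homogeneous ideal has a minimal free resolution
$$0\to \bigoplus_{j=1}^{t}\oo_{\PPP}(-\beta_j)\xrightarrow{\ M\ } \bigoplus_{i=1}^{t+1}\oo_{\PPP}(-\alpha_i)\to \ii_{C}\to 0 .$$
The minimal generators $f_1,\dots,f_{t+1}$ of $I_C$ are, up to sign, the maximal $t\times t$ minors of the $(t+1)\times t$ matrix $M$. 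The entry $m_{ij}$ is homogeneous of degree $\beta_j-\alpha_i$, and is zero when this degree is $\le 0$, by minimality. Because $C$ is not a complete intersection, $I_C$ needs at least three generators, so $t\ge 2$.

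Since $F\in I_C$, we write $F=\sum_{i=1}^{t+1} g_i f_i$ with $g_i$ homogeneous of degree $d-\alpha_i$. Expanding by the first column, this expression is exactly the determinant of the $(t+1)\times(t+1)$ matrix $S'=(g\mid M)$, obtained by adjoining the column $(g_1,\dots,g_{t+1})^T$ to $M$ (signs absorbed into the $g_i$). Thus $F=\det S'$. The entries of $S'$ have degrees of the form $b_j-a_i$, where we take $a_i=\alpha_i$ for the rows, and $b_0=d$, $b_j=\beta_j$ for the columns. A zero entry is allowed whenever $a_i\ge b_j$.

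It remains to arrange the conditions of Definition~\ref{def::admissiblePair}. First, reorder the rows and the columns so that the $a_i$ and the $b_j$ are nondecreasing; permuting rows and columns only changes the determinant by a sign, which can be absorbed. The degree condition is automatic, since a matrix with entries of degree $b_j-a_i$ has determinant of degree $\sum b_j-\sum a_i$, and here this equals $\deg F=d$. Note that the length is $t+1\ge 3\ge 2$.

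The main obstacle is condition (iii), $a_i<b_i$ after sorting. Suppose it fails at some index $k$, so $b_k\le a_k$. Then for every $i\ge k$ and every $j\le k$ we have $b_j\le b_k\le a_k\le a_i$, so every entry in rows $k,\dots,t+1$ and columns $1,\dots,k$ has degree $\le 0$. I would first argue that such an entry must be zero.
\begin{itemize}
\item Entries coming from $M$ vanish by minimality of the resolution, since there are no nonzero constants in these positions.
\item Entries coming from the column $g$ have degree $d-\alpha_i$. If some $g_i$ is a nonzero constant, then $F$ is a minimal generator. This can be circumvented: if $g_i$ is a nonzero constant, we may instead use $F$ as the generator $f_i$ and keep $F=\det$ of the corresponding matrix. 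More directly, a nonzero constant entry is harmless when $b_j=a_i$, because we only need zeros where the degree is strictly negative. So I would treat the degree-$0$ case by hand, or accept constants there.
\end{itemize}
Granting that these entries vanish, the matrix has a zero block of size $(t+2-k)\times k$. Since $(t+2-k)+k=t+2$ exceeds the size $t+1$ of the square matrix, the Frobenius--König theorem forces $\det S'=0$. This contradicts $F\neq 0$, so (iii) holds.

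Degree-$0$ constant entries are allowed in the definition, since only $a_i\ge b_j$ forces zero, and $a_i=b_j$ falls under that. So the precise care needed is to ensure that any constant entry sitting in a position with $a_i\ge b_j$ is removed. For entries of $M$ this holds by minimality. For a constant $g_i$ with $\alpha_i=d$, I would perform a column operation: subtracting multiples of that column does not change $\det$ and keeps homogeneity. Alternatively, I would note that in this case $F$ is a minimal generator, replace the resolution so that $F=f_{i}$, and take $g=e_i$, which places the constant $1$ at a position with $a_i=b_0=d$. I would then handle this via the alternative Hilbert--Burch presentation, case (iii) of the main theorem, where the $t\times t$ minor itself gives $F$ as the determinant of the $t\times t$ submatrix of $M$ obtained by deleting row $i$. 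In that case we use a length-$t$ pair with $t\ge 2$, and the same Frobenius--König argument gives (iii).
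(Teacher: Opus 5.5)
Your proposal is correct and is essentially the paper's proof. When $F$ is not a minimal generator you adjoin the coefficient column to the Hilbert--Burch matrix; a nonzero constant $g_i$ can occur only when $F$ is a minimal generator, which is exactly the paper's final check. When $F$ is a minimal generator you realize it as a maximal minor of the Hilbert--Burch matrix of a minimal generating set containing $F$. Your Frobenius--K\"onig argument for $a_i<b_i$ is a genuine improvement, since the paper never verifies condition (iii) of Definition~\ref{def::admissiblePair}.

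Do discard the detours claiming that degree-zero constants can be tolerated or removed by column operations. Neither works, because such a constant is determined by the class of $F$ in $I_C/\mathfrak{m}I_C$; instead, split into the two cases at the outset.
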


\begin{proof}
Let $C\subseteq X$ be an ACM curve that is not a complete intersection. Two cases may occur.

\begin{itemize}
\item[Case 1:] $F$ is a minimal generator of the ideal of $C$.\\
By the Hilbert--Burch Theorem, there exists a matrix $M$ of size $(t+1)\times t$ such that $F$ is the determinant of a $t\times t$ minor of $M$. By degree considerations, we have $t\leq d$. After a change of coordinates, we may assume that the Betti numbers of the minimal free resolution of the ideal of $C$ are ordered such that $a_{0}=d$, $a_{1}\leq\cdots\leq a_{t}$, and $b_{1}\leq\cdots\leq b_{t}$. Let $m=(m_{ij})$ be the $t\times t$ submatrix of $M$ such that $\det(m)=F$ and $\deg(m_{ij})=b_j-a_i$. Setting $\hat{a}=(a_1,\ldots,a_t)$ and $\hat{b}=(b_1,\ldots,b_t)$, it follows that $X$ is weak determinantal of type $(\hat{a},\hat{b})$.

\item[Case 2:] $F$ is not a minimal generator of the ideal of $C$.\\
Let $F_1,\ldots,F_t$ be a minimal generating set of the ideal of $C$. By the Hilbert--Burch Theorem, there exists a matrix $M=(m_{ij})$ of size $t\times(t-1)$ such that each $F_i$ is the determinant of a $(t-1)\times(t-1)$ minor of $M$. Since $F$ lies in the ideal of $C$, there exist homogeneous polynomials $m_{1t},\ldots,m_{tt}\in\mathds{C}[x,y,z,w]$ such that
\[
F=\sum_{i=1}^{t}(-1)^i m_{it}F_i.
\]
Let $S$ be the matrix obtained by completing $M$ with the column $(m_{it})$. Expanding $\det(S)$ along the last column yields $\det(S)=\pm F$. The degrees satisfy $\deg(m_{ij})=b_j-a_i$, where $a_1,\ldots,a_t$ and $b_1,\ldots,b_{t-1}$ are the Betti numbers of the minimal free resolution of the ideal of $C$, ordered increasing. Setting $b_t=d$, $\hat{a}=(a_1,\ldots,a_t)$, and
\[
\hat{b}=(b_1,\ldots,b_{j_0-1},b_t,b_{j_0},\ldots,b_{t-1})
\quad\text{if } b_{j_0-1}\leq b_t<b_{j_0},
\]
we conclude that $X$ is a weak determinantal surface of type $(\hat{a},\hat{b})$.

Finally, we must verify that $m_{it}=0$ whenever $a_i=d$. If not, there exists $F_i$ of degree $d$ such that $m_{it}\neq0$, and hence
\[
F_i=\sum_{j\neq i} m_{jt}'F_j + m_{it}^{-1}F.
\]
Thus, $\{F_1,\ldots,F_{i-1},F,F_{i+1},\ldots,F_t\}$ would be a minimal generating set of the ideal of $C$, contradicting the hypothesis.
\end{itemize}
\end{proof}
\begin{rmk}
This proof generalizes the determinantal criterion given in \cite{LLV}*{Prop. 1.8}.
\end{rmk}

\begin{proof}[Proof of Theorem 1]
Item (1) is a direct consequence of Proposition~\ref{propnotdet}. For item (2), we distinguish the following cases.
\begin{itemize}
\item[Case 0:] $C$ is a complete intersection.

\item[Case 1:] $C$ is not a complete intersection and $F$ is a minimal generator of the ideal of $C$.\\
By the proof of Proposition~\ref{propnotdet}, there exists a matrix $m=(m_{ij})$ of size $t\times t$ such that $\det(m)=F$ and $\deg(m_{ij})=b_j-a_i$ for all $i,j\in\{1,\ldots,t\}$, where $a_0,a_1,\ldots,a_t,b_1,\ldots,b_t$ are the Betti numbers of the minimal free resolution of the ideal of $C$. Let $\hat{a}=(a_1,\ldots,a_t)$ and $\hat{b}=(b_1,\ldots,b_t)$. By definition, $X$ is weak determinantal of type $(\hat{a},\hat{b})$. Hence, there exists $j\in I$ such that $(\hat{a},\hat{b})$ is equivalent to $(\hat{a}_j,\hat{b}_j)$, meaning that there exists $k\in\mathds{Z}$ such that $a_i=a_{ji}+k$ and $b_i=b_{ji}+k$ for all $i\in\{1,\ldots,t\}$, and moreover
\[
a_0=\sum_{i=1}^{t}(b_i-a_i)=d.
\]

\item[Case 2:] $C$ is not a complete intersection and $F$ is not a minimal generator of the ideal of $C$.\\
Let $F_1,\ldots,F_t$ be a minimal set of generators of the ideal of $C$. By the proof of Proposition~\ref{propnotdet}, there exists a matrix $S$ such that expanding $\det(S)$ with respect to the last column yields $\det(S)=\pm F$, where $\deg(m_{ij})=b_j-a_i$, with $a_1,\ldots,a_t$ and $b_1,\ldots,b_{t-1}$ the Betti numbers of the minimal free resolution of the ideal of $C$, ordered increasing. Setting $b_t=d$, $\hat{a}=(a_1,\ldots,a_t)$, and
\[
\hat{b}=(b_1,\ldots,b_{j_0-1},b_t,b_{j_0+1},\ldots,b_{t-1})
\quad\text{if } b_{j_0-1}\leq b_t<b_{j_0+1},
\]
we conclude that $X$ is a weak determinantal surface of type $(\hat{a},\hat{b})$. Hence, there exists $j\in I$ such that $(\hat{a},\hat{b})$ is equivalent to $(\hat{a}_j,\hat{b}_j)$, that is, there exists $k\in\mathds{Z}$ such that $a_i=a_{ji}+k$ and $b_i=b_{ji}+k$ for all $i\in\{1,\ldots,t\}$. In particular, we have $d=b_{jj_0}+k$, thus $k=d-b_{jj_0}$.
\end{itemize}
\end{proof}

\begin{rmk}
\begin{enumerate}
\item In item (2), case (ii), if $k=d-b_j$ for some $j$, then $F$ ceases to be a minimal generator of the ideal of $C$.
\item While the definition of admissible pairs in \cite{LLV} yields only finitely many admissible pairs, in our setting there are infinitely many weak admissible pairs.
\end{enumerate}
\end{rmk}

To conclude this section, we associate a matrix to each admissible pair. For this purpose, we define the following function:
\[
\delta:\mathds{Z}\times\mathds{Z}\longrightarrow\mathds{N}, \qquad
(a,b)\longmapsto
\begin{cases}
b-a, & \text{if } b-a>0,\\
0, & \text{if } b-a\leq 0.
\end{cases}
\]

\begin{defi}
Let $(\hat{a},\hat{b})$ be a weak admissible pair. We define the degree matrix associated to $(\hat{a},\hat{b})$ by
\[
M_{(\hat{a},\hat{b})}:=
\begin{pmatrix}
\delta(a_1,b_1) & \delta(a_1,b_2) & \cdots & \delta(a_1,b_t)\\
\delta(a_2,b_1) & \delta(a_2,b_2) & \cdots & \delta(a_2,b_t)\\
\vdots & \vdots & \ddots & \vdots\\
\delta(a_t,b_1) & \delta(a_t,b_2) & \cdots & \delta(a_t,b_t)
\end{pmatrix}.
\]
\end{defi}

Note that if two admissible pairs are equivalent, then their degree matrices coincide. On the other hand, given an admissible pair $(\hat{a},\hat{b})$, there may exist another admissible pair $(\hat{a}',\hat{b}')$ such that
\[
M_{(\hat{a}',\hat{b}')} = M_{(\hat{a},\hat{b})}^{a},
\]
where $M^{a}$ denotes the transpose of a matrix with respect to its antidiagonal. In this case, we say that $(\hat{a},\hat{b})$ and $(\hat{a}',\hat{b}')$ are \textit{dual}.\\

Although there are infinitely many weak admissible pairs, most of them define the same collection of weak determinantal surfaces. In order to identify all such pairs, we need the following result.

\begin{lem}
Let $M_{(\hat{a},\hat{b})}=(m_{ij})$ be the degree matrix of an admissible pair $(\hat{a},\hat{b})$ of degree $d$ and length $t$. If $m_{ij}>0$, then $m_{ji}<d$.
\end{lem}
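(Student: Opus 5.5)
Write $m_{ij}=\delta(a_i,b_j)$. If $m_{ji}=0$ there is nothing to prove, since $d=\sum_k(b_k-a_k)\ge t\ge 2>0$ by condition (iii). So assume both $m_{ij}>0$ and $m_{ji}>0$, that is, $b_j>a_i$ and $b_i>a_j$. It then suffices to show $b_i-a_j<d$. The plan is to bound $b_i-a_j$ by a partial telescoping sum of the diagonal gaps $b_k-a_k$, which are all positive, and to use the hypothesis $m_{ij}>0$ to show that at least one diagonal gap is left out of that partial sum.

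\textbf{Case $i=j$.} Here $m_{ii}=b_i-a_i$. Since $t\ge2$, there is an index $k\neq i$, and $b_k-a_k\ge1$. Hence $b_i-a_i\le d-1<d$.

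\textbf{Case $i<j$.} Monotonicity gives $a_i\le a_j$ and $b_i\le b_j$, so
\[
b_i-a_j\le b_i-a_i\le d-(b_j-a_j)<d,
\]
because $b_j-a_j\ge1$.

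\textbf{Case $i>j$.} This is the only case where the positivity of $m_{ij}$ is actually needed. Telescoping gives
\[
b_i-a_j=\sum_{k=j}^{i}(b_k-a_k)-\sum_{k=j}^{i-1}(b_k-a_{k+1}).
\]
The pairs $(k,k+1)$ in the second sum are consecutive, so I cannot directly assert $b_k>a_{k+1}$. Instead I use that each $b_k-a_{k+1}$ is at least $b_j-a_i$, because $b_k\ge b_j$ and $a_{k+1}\le a_i$ for $j\le k\le i-1$. Since $m_{ij}>0$ means $b_j-a_i>0$, every term of the second sum is $\ge1$, and there are $i-j\ge1$ of them. Therefore
\[
b_i-a_j\le \sum_{k=j}^{i}(b_k-a_k)-(i-j)\le d-1<d.
\]

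The main obstacle is this last case. The point to verify is the bound $b_k-a_{k+1}\ge b_j-a_i$, which rests on the index inequalities $k\ge j$ and $k+1\le i$ together with the monotonicity conditions (i) and (ii). With that checked, the three cases together give $m_{ji}<d$ whenever $m_{ij}>0$.
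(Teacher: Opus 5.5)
Your proof is correct, but it is organized differently from the paper's.

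The paper makes no case distinction and never uses the monotonicity conditions (i)--(ii). It relies only on the exchange identity $(b_i-a_j)+(b_j-a_i)=(b_i-a_i)+(b_j-a_j)$ for $i\neq j$. Combined with the positivity of all diagonal gaps $b_k-a_k$, this gives $b_i-a_j\le d-(b_j-a_i)<d$ as soon as $m_{ij}=b_j-a_i>0$. The paper phrases this as a contradiction derived from $m_{ji}=d+n$.

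Your telescoping sum for $i>j$ is a stretched-out form of the same identity. If you collapse the chain you get the two-term identity directly, and then neither the auxiliary bound $b_k-a_{k+1}\ge b_j-a_i$ nor monotonicity is needed. The same identity would also handle $i<j$ without appealing to $a_i\le a_j$. So the paper's route is shorter and slightly more general: it holds for any integer sequences with positive diagonal gaps, sorted or not.

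What your route buys is that it is complete as written. You explicitly dispose of $m_{ji}=0$ and of the diagonal case $i=j$. The paper's displayed decomposition $d=\sum_{k\neq i,j}(b_k-a_k)+b_j-a_i+d+n$ is valid only for $i\neq j$, since for $i=j$ it counts the diagonal gap twice. The paper therefore tacitly leaves the trivial case $i=j$ to the reader.
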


\begin{proof}
Suppose that $0<m_{ij}=\delta(a_i,b_j)=b_j-a_i$ and that $m_{ji}\geq d$. Then $m_{ji}=b_i-a_j=d+n$ for some $n\in\mathds{N}$. On the other hand,
\[
d=\sum_{k=1}^{t}(b_k-a_k)
=\sum_{k\neq i,j}(b_k-a_k)+b_j-a_i+d+n.
\]
Since $b_k-a_k>0$ for all $k$, this implies $b_j-a_i<0$, a contradiction. Therefore, $m_{ji}<d$.\\
\end{proof}

\begin{defi}\label{tipoadpar}
Two admissible pairs $(\hat{a},\hat{b})$ and $(\hat{a}',\hat{b}')$ are said to be of the same \textit{kind} if they have the same degree and length, and if their degree matrices $M_{(\hat{a},\hat{b})}=(m_{ij})$ and $M_{(\hat{a}',\hat{b}')}=(m_{ij}')$ satisfy $m_{ij}=m_{ij}'$ for all entries with $0<m_{ij}<d$.
\end{defi}

\begin{rmk}\label{remadmpar}
\begin{enumerate}
\item If $X$ is a weak determinantal surface of type $(\hat{a},\hat{b})$, then $X$ is also weak determinantal of type $(\hat{a},\hat{b})^{a}$, where $(\hat{a},\hat{b})^{a}$ is dual to $(\hat{a},\hat{b})$.

\item Let $(\hat{a},\hat{b})$ and $(\hat{a}',\hat{b}')$ be weak admissible pairs of the same kind. With the notation above, observe that if $m_{ij}=0$, then $i>j$. Since the pairs are the same kind, we have either $m_{ij}'=0$ or $m_{ji}'=0$, but as $i>j$, it follows that $m_{ij}'=0$. Hence, $m_{ij}=0$ if and only if $m_{ij}'=0$. This implies that the kind of a weak admissible pair determines the positions of the zeros in the degree matrix. Consequently, there are only finitely many kinds. Moreover, if two weak admissible pairs are the same kind, then a surface $X$ is weak determinantal of one type if and only if it is weak determinantal of the other.

\item One way to determine all admissible pairs associated to a given surface $X$ is to consider curves obtained as complete intersections of $X$ with surfaces of low degree, and then link these curves by complete intersections of $X$ with other surfaces containing them. Computing the resolution of the linked curve yields an admissible pair (this step follows from the fact that we know the minimal resolution of a complete intersection and \cite{mdp2}*{III Prop.1.4}); taking its dual and all admissible pairs with degree matrix of the same type, and repeating this process finitely many times, produces all admissible pairs defining $X$ as a weak determinantal surface.
\end{enumerate}
\end{rmk}

\begin{ques}
    From all above, a natural question is the following: Given a degree $d$ surface in $\PPP$, how can one determine that the set of all weak admissible pairs for X has been fully characterized?
\end{ques}

\section{The known cases} \label{sec::2}

In this section, we analyze the cases of surfaces of degrees $2$ and $3$. These cases have already been completely understood when the surface is smooth. For the degree $2$ case, we refer to \cite{har2}*{III, Ex. 5.6}. For the degree $3$ case, we refer to the work of Masayuki Watanabe in \cite{mwa}*{Prop. 4.3}.

\subsection{Degree two}

The possible weak admissible pairs are $((1,1),(2,2))$ and $((1,1+n),(2,2+n))$ for $n\in\mathds{N}^{*}$. It is well known that all smooth quadric surfaces are determinantal, which means that all smooth quadrics are of type $((1,1),(2,2))$. On the other hand, if a quadric surface is of type $((1,1+n),(2,2+n))$ for some $n\in\mathds{N}^{*}$, then its degree matrix is of the form
\[
M_{((1,1+n),(2,2+n))}=
\begin{pmatrix}
1 & n+1\\
0 & 1
\end{pmatrix}.
\]
This implies that a weak determinantal surface of this type is reducible and, in particular, singular. Therefore, a smooth quadric surface cannot be of type $((1,1+n),(2,2+n))$.

\begin{cor}
Let $Q$ be a smooth quadric surface and let $C\subseteq Q$ be a curve. Then $C$ is an ACM curve if and only if one of the following conditions holds:
\begin{enumerate}
\item $C$ is a complete intersection, or
\item $C$ has the following minimal free resolution:
\[
0\to \oo_{\PPP}(-(2+k))^{2}\to \oo_{\PPP}(-(1+k))^{2}\oplus \oo_{\PPP}(-2)\to \ii_{C}\to 0
\]
for $k\geq 1$.
\end{enumerate}
\end{cor}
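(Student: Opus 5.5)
We derive the corollary from Theorem~1 applied to $X=Q$ with $d=2$. The first step is to determine exactly which weak admissible pairs make $Q$ weak determinantal. The pairs of degree $2$ are, up to equivalence, those listed above. The length must satisfy $t\le d=2$, since each $b_i-a_i\ge 1$ and the differences sum to $d$; together with $t\ge 2$ this forces $t=2$ and $b_i-a_i=1$. Normalizing $a_1=1$ gives $((1,1),(2,2))$ or $((1,1+n),(2,2+n))$ with $n\ge 1$.

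Next we rule out the second family. If $n\ge1$, then $a_2=1+n\ge 2=b_1$, so the entry $m_{21}$ is forced to be $0$. Hence $F=m_{11}m_{22}$ is a product of two linear forms and $Q$ is reducible, a contradiction. Conversely, a smooth quadric is $xw-yz$ after a change of coordinates. It is therefore determinantal, of type $((1,1),(2,2))$ and only of this type up to equivalence. So $Q$ falls under item (2) of Theorem~1 with $I$ a single class.

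We now read off the resolutions. For case (ii), with $\hat a=(1,1)$, $\hat b=(2,2)$ and shift $k$, Theorem~1 gives
\[
0\to \oo_{\PPP}(-(2+k))^{2}\to \oo_{\PPP}(-(1+k))^{2}\oplus\oo_{\PPP}(-2)\to \ii_C\to 0 .
\]
Case (iii) requires a shifted pair $(a_1+k,a_2+k)$, $(\dots,d,\dots)$ with some $b_{j_0}+k=2$, so $k=0$. The resolution then becomes $0\to\oo(-1)\to\oo(-1)^{2}\to\ii_C\to0$, which is not minimal because it has a degree-zero map. Equivalently, $C$ would be a line, a complete intersection. So case (iii) contributes nothing new.

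The remaining point, which is the main obstacle, is justifying $k\ge1$ in case (ii). Minimality forces $k\ne 0$: when $k=0$ the generators have degrees $1,1,2$, and the quadric $F$ would lie in the ideal generated by the two linear forms. In fact, if $k=0$ the matrix has degree-$0$ entries in the row coming from $\oo(-2)$, contradicting minimality. For $k<0$, the generators of degree $1+k\le 0$ are impossible for the ideal of a curve, unless $k=-1$ gives degree $0$, which is absurd. Hence $k\ge1$. For the converse, any $C$ with such a resolution is ACM by the Hilbert–Burch theorem, since its resolution has length one. Combined with case (i) of Theorem~1, this gives the stated equivalence.
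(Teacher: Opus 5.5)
Your proposal is correct and follows the paper's route: a smooth quadric is weak determinantal only of type $((1,1),(2,2))$, because the family $((1,1+n),(2,2+n))$ forces $m_{21}=0$ and hence a reducible quadric, and the corollary is then read off from Theorem~1. You also justify $k\geq 1$ and dispose of case (iii), both of which the paper leaves implicit.

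One slip: in case (iii), Theorem~1 gives $0\to \oo_{\PPP}(-2)\to \oo_{\PPP}(-1)^{2}\to \ii_{C}\to 0$, not a resolution with $\oo_{\PPP}(-1)$ on the left and a degree-zero map. This resolution is minimal (the Koszul resolution of a line), so non-minimality is not what excludes the case. It is excluded because $C$ would be a line and hence a complete intersection, as you go on to say.
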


In both cases, the curves are of type $(s,t)$ on $Q$ with $|s-t|\leq 1$ (see \cite{har2}*{III, Ex. 5.6}).\\

On the other hand, observe that all weak admissible pairs of the form $((1,1+n),(2,2+n))$ have the same type for all $n\geq 1$. Hence, we obtain the following result.

\begin{cor}\label{singularcuadric}
Let $Q$ be a quadric surface of type $((1,2),(2,3))$ and let $C\subseteq Q$ be a curve. Then $C$ is an ACM curve if and only if one of the following conditions holds:
\begin{enumerate}
\item $C$ is a complete intersection, or
\item $C$ has the following minimal free resolution:
\[
0\to \oo_{\PPP}(-(1+k))\oplus \oo_{\PPP}(-(n+1+k))\to
\oo_{\PPP}(-(2+k))\oplus \oo_{\PPP}(-(2+k+n))\oplus \oo_{\PPP}(-2)
\to \ii_{C}\to 0
\]
for $n,k\geq 1$.
\end{enumerate}
\end{cor}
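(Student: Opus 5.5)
The corollary for a quadric $Q$ of type $((1,2),(2,3))$ should follow by specializing Theorem~1 to this surface. The plan has four steps: determine which weak admissible pairs $Q$ admits, check whether $Q$ can also be determinantal, read off the resolutions from cases (ii) and (iii), and verify the converse.

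\textbf{Step 1: the types of $Q$.} The possible weak admissible pairs of degree $2$ are $((1,1),(2,2))$ and $((1,1+n),(2,2+n))$ for $n\ge1$, up to equivalence. By Remark~\ref{remadmpar}(2), all pairs $((1,1+n),(2,2+n))$ with $n\geq 1$ are of the same kind. Their degree matrices have diagonal entries $1$ and a zero in position $(2,1)$, and the only entry $n+1$ with $n+1\geq d=2$ is ignored in the definition of kind. So $Q$ is weak determinantal of type $((1,1+n),(2,2+n))$ for every $n\ge1$. Concretely $F=\ell_1\ell_2$ is a product of linear forms, and for each $n$ it is the determinant of $\begin{pmatrix}\ell_1 & g\\ 0&\ell_2\end{pmatrix}$ with $g$ an arbitrary form of degree $n+1$.

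\textbf{Step 2: is $Q$ determinantal?} A product of two linear forms is also a $2\times2$ determinant of linear forms: if $\ell_1,\ell_2$ are independent take $\begin{pmatrix}\ell_1&0\\0&\ell_2\end{pmatrix}$, and if they are proportional use a degenerate matrix. So $Q$ is also of type $((1,1),(2,2))$. Case (ii) of Theorem~1 for this pair gives resolutions $\oo(-(2+k))^2\to\oo(-(1+k))^2\oplus\oo(-2)$. I would either absorb these as the degenerate case $n=0$, or note that the corollary as stated lists only the $n\ge1$ family. I expect this bookkeeping to be the main obstacle: the ``only of these types'' hypothesis in Theorem~1 forces one to include every type of $Q$. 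If I cannot exclude the $((1,1),(2,2))$ resolutions, I would state the corollary for $n\ge0$.

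\textbf{Step 3: reading off the resolutions.} For case (ii) with the pair $((1,1+n),(2,2+n))$, Theorem~1 gives
$$0\to\oo(-(2+k))\oplus\oo(-(2+n+k))\to\oo(-(1+k))\oplus\oo(-(1+n+k))\oplus\oo(-2)\to\ii_C\to0.$$
The roles of the $a$'s and $b$'s here appear swapped relative to the printed statement, so I would check this against the printed version and use the dual pair $(\hat a,\hat b)^a$ if needed. For case (iii), one sets $b_{j_0}=d=2$ after a shift $k$. The resulting resolution has generators of degrees $2-b_{j_0}+a_i$, and one checks that it is the complete intersection of $Q$ with a surface of degree $n+1$ or similar. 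In that situation $F$ is a minimal generator, so case (iii) gives nothing new beyond case (i) and can be discarded.

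\textbf{Step 4: the converse.} Any curve with such a resolution is ACM by the Hilbert--Burch theorem, since a resolution of length one means $H^1_*(\ii_C)=0$. Such a curve lies on $Q$ once the Hilbert--Burch matrix is chosen to contain the $2\times2$ block defining $F$.
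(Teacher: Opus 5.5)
Your Step~1 is exactly the paper's argument. Its entire proof is the remark that the pairs $((1,1+n),(2,2+n))$, $n\geq 1$, are all of the same kind, followed by Theorem~1, and it overlooks precisely what you raise in Step~2.

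\textbf{The $n=0$ family cannot be excluded.} Your Step~2 concern is not bookkeeping, and ``noting that the corollary lists only $n\geq1$'' is not an option. Since $\ell_1\ell_2=\det\mathrm{diag}(\ell_1,\ell_2)$ (indeed every quadric is a $2\times 2$ determinant of linear forms), $Q$ is also of type $((1,1),(2,2))$, and the corresponding curves really occur. On $Q=V(xy)$, take the Hilbert--Burch matrix with rows $(x,0)$, $(0,y)$, $(z,w)$. Its maximal minors are $-yz$, $xw$, $xy$, and $(xy,xw,yz)=(x,y)\cap(x,z)\cap(y,w)$ is the saturated ideal of the chain of three lines $V(x,y)\cup V(x,z)\cup V(y,w)\subseteq Q$. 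This curve is ACM and not a complete intersection, and $F=xy$ is a minimal generator of its ideal. Its minimal resolution is $0\to\oo_{\PPP}(-3)^{2}\to\oo_{\PPP}(-2)^{3}\to\ii_C\to0$, and the generator degrees $\{2,2,2\}$ fit the listed shape only for $n=0$. So the corollary as printed is false, and what you can actually prove is your fallback with $n\geq 0$.

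\textbf{The swapped resolution.} You are right that the $a$'s and $b$'s are swapped, but passing to the dual pair cannot fix this: $\begin{pmatrix}1&n+1\\0&1\end{pmatrix}$ is its own anti-transpose. The printed complex is not a resolution at all. Every map into the summand $\oo_{\PPP}(-(2+k+n))$ is given by a form of negative degree, so a row of the Hilbert--Burch matrix vanishes. The correct list is the one your Step~3 derives, namely $0\to\oo_{\PPP}(-(2+k))\oplus\oo_{\PPP}(-(2+n+k))\to\oo_{\PPP}(-(1+k))\oplus\oo_{\PPP}(-(1+n+k))\oplus\oo_{\PPP}(-2)\to\ii_C\to0$, with $n\geq0$ and $k\geq1$.

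\textbf{Two smaller points, both also skipped in the paper.} First, Theorem~1 only gives $k\in\mathds{Z}$, so the bound $k\geq 1$ needs an argument. For $k\leq 0$, degree reasons and minimality kill the entry of degree $k$ in the row of $F$. If $n\geq 1$, they also kill the entry of degree $1-n\leq 0$ in the row of the degree-$(1+n+k)$ generator; if $n=0$, they kill the whole row of $F$. Either way some maximal minor, i.e.\ some minimal generator, would be zero, hence $k\geq1$.

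Second, your dismissal of case (iii) reaches the right conclusion with a wrong justification: in case (iii), $F$ is by hypothesis \emph{not} a minimal generator. The clean argument is that a curve which is not a complete intersection has $t\geq 3$ minimal generators. The construction in Proposition~\ref{propnotdet} would then give a weak admissible pair of degree $2$ and length $t\geq 3$, which is impossible because each $b_i-a_i\geq 1$.

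Your Step~4 is fine.
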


\subsection{Degree three}

In this case, the classification of all weak admissible pairs is more involved. We therefore organize the admissible pairs according to their type.

 $$ M_{1}=\left( \begin{array}{cc}
1 & 2\\
1 & 2
\end{array}
    \right)  \text{ the unique weak admissible pair with this matrix is } ((1,1),(2,3)).$$
 $$ M_{2}=\left( \begin{array}{cc}
2 & 2\\
1 & 1
\end{array}
    \right)  \text{ the unique weak admissible pair with this matrix is } ((1,2),(3,3)). $$
    $$ M_{3}(n)=\left( \begin{array}{cc}
2 & n\\
0 & 1
\end{array}
    \right)  \text{ the weak admissible pairs with this matrix are } ((1,n),(3,n+1)) \text{ with } n\geq 4. $$
    $$ M_{4}(n)=\left( \begin{array}{cc}
1 & n\\
0 & 2
\end{array}
    \right)  \text{ the weak admissible pairs with this matrix are } ((1,n),(2,2+n)) \text{ with } n\geq 2. $$
 $$ M_{5}=\left( \begin{array}{ccc}
1 & 1 & 1\\
1 & 1 & 1 \\
1 & 1 & 1
\end{array}
    \right) \text{ the unique weak admissible pair with this matrix is } ((1,1,1),(2,2,2)).$$
$$ M_{6}(n)=\left( \begin{array}{ccc}
1 & 1 & n\\
1 & 1 & n \\
0 & 0 & 1
\end{array}
    \right) \begin{array}{c}
\text{ the weak admissible pairs with this matrix are } ((1,1,n),(2,2,n+1)) \\\text{ with } n\geq 2.\end{array} $$ 
$$ M_{7}(n)=\left( \begin{array}{ccc}
1 & n & n\\
0 & 1 & 1 \\
0 & 1 & 1
\end{array}
    \right) \begin{array}{c}\text{ the weak admissible pairs with this matrix are } \\((1,n,n),(2,n+1,n+1)) \text{ with } n\geq 2.\end{array}$$

 $$ M_{8}(m,n)=\left( \begin{array}{ccc}
1 & m+1 & n+1\\
0 & 1 & n-m+1 \\
0 & 0 & 1
\end{array}
    \right) \begin{array}{c}\text{ the weak admissible pairs with this matrix are } \\ 
    ((0,m, n),(1, m+1, n+1)) \\
    \text{ with } 1\leq m<n.\end{array}$$

Observe that $M_{1}^{a}=M_{2}$, $M_{3}(n)^{a}=M_{4}(n)$, and $M_{6}(n)^{a}=M_{7}(n)$. If a weak determinantal surface $X$ has degree matrix of type $M_{3}(n)$, $M_{4}(n)$, $M_{6}(n)$, $M_{7}(n)$, or $M_{8}(m,n)$, then $X$ is necessarily reducible. Therefore, a smooth surface cannot be of any of these types. However, there exist smooth cubic weak determinantal surfaces with degree matrices $M_{1}$, $M_{2}$, or $M_{5}$, and the general element of each of these types is smooth.

\begin{cor}
Let $X$ be a smooth cubic surface and let $C\subseteq X$ be a curve. Then $C$ is an ACM curve if and only if one of the following cases occurs:
\begin{enumerate}
\item $X$ is a general weak determinantal surface of type $((1,1),(2,3))$ and $C$ is one of the following:
\begin{enumerate}[label=(\roman*)]
\item $C$ is a complete intersection.
\item $C$ has one of the following minimal free resolutions:
\begin{enumerate}[label=\Alph*]
\item For $k\geq 0$,
\[
0\to \oo_{\PPP}(-(3+k))\oplus \oo_{\PPP}(-(2+k))
\to \oo_{\PPP}(-(1+k))^{2}\oplus \mathcal{O}_{\PPP}(-3)
\to \ii_{C}\to 0.
\]
\item For $k\geq 0$,
\[
0\to \oo_{\PPP}(-(3+k))^{2}
\to \oo_{\PPP}(-(1+k))\oplus \oo_{\PPP}(-(2+k))\oplus \mathcal{O}_{\PPP}(-3)
\to \ii_{C}\to 0.
\]
\end{enumerate}
\end{enumerate}

\item $X$ is a general determinantal surface of type $((1,1,1),(2,2,2))$ and $C$ is one of the following:
\begin{enumerate}[label=(\roman*)]
\item $C$ is a complete intersection.
\item $C$ has the following minimal free resolution:
\[
0\to \oo_{\PPP}(-(2+k))^{3}
\to \oo_{\PPP}(-(1+k))^{3}\oplus \mathcal{O}_{\PPP}(-3)
\to \ii_{C}\to 0
\]
for $k\geq 1$.
\end{enumerate}
\end{enumerate}
\end{cor}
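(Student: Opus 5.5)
The plan is to specialize Theorem~1 to a smooth cubic $X$. First I would decide which weak admissible pairs can make a smooth cubic weak determinantal. The list $M_1,\dots,M_8$ above is meant to exhaust the kinds of degree $3$. For $M_3(n),M_4(n),M_6(n),M_7(n),M_8(m,n)$ the degree matrix has a zero block below the diagonal, so $F=\det(S)$ factors and $X$ is reducible. Hence a smooth cubic can only be of type $M_1$, $M_2$ or $M_5$.

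By Remark~\ref{remadmpar}(1), type $M_1$ and type $M_2$ define the same surfaces, since $M_1^a=M_2$. I would also use the classical facts for smooth cubics. Every smooth cubic is determinantal of type $((1,1,1),(2,2,2))$, because it contains a twisted cubic: link a line to a conic. Every smooth cubic contains a line $L$ and is of type $((1,1),(2,3))$, because $F\in(x,y)=\ii_L$ gives $F=\det\begin{pmatrix}x&-Q_2\\y&Q_1\end{pmatrix}$. So by Theorem~1(2) the ACM curves are the complete intersections together with the curves having the resolutions of cases (ii) and (iii) for the pairs $((1,1),(2,3))$, $((1,2),(3,3))$ and $((1,1,1),(2,2,2))$.

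Next I would write these resolutions out explicitly. For case (ii), shift the pairs by $k$:
\begin{itemize}
\item $((1,1),(2,3))$ gives resolution B;
\item $((1,2),(3,3))$ gives resolution A, after rewriting the generator degrees $(1+k,2+k)$ with syzygy degrees $(3+k,3+k)$ and reindexing;
\item $((1,1,1),(2,2,2))$ gives the $3\times 3$ resolution.
\end{itemize}
I would check that the indexing matches the displayed statement; there are small discrepancies to reconcile, for instance A versus B as written. Minimality of $F$ as a generator forces a range of $k$, since a generator of degree $d$ must not be redundant. That gives $k\geq 0$, or $k\geq 1$ in the $3\times3$ case, because for $k=0$ a generator has degree $3$ and the resolution degenerates.

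For case (iii), setting some $b_{j_0}+k=3$ and dropping the corresponding column yields a Hilbert--Burch resolution with one fewer generator. I would show that every such resolution is already a case (ii) resolution of a smaller pair, or is a complete intersection. For example, from $((1,1,1),(2,2,2))$ with $k=1$ one gets the $3\times2$ matrix of linear forms of the twisted cubic, which should be absorbed into the $M_1/M_2$ list. The main obstacle is this bookkeeping: the statement lists only case (ii)-type resolutions, so I must verify that case (iii) contributes nothing new. I must also confirm that the phrase ``general'' correctly handles smooth cubics, namely that every smooth cubic is simultaneously of types $M_1$, $M_2$ and $M_5$. In that case the two items are really both available on every smooth $X$, and I would state it that way.
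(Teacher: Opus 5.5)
Your overall route is the paper's. You discard the kinds $M_3,M_4,M_6,M_7,M_8$ because a zero block below the diagonal makes $\det(S)$ factor, keep $M_1,M_2,M_5$, and read off Theorem~1. Your remark that every smooth cubic is simultaneously of types $((1,1),(2,3))$, $((1,2),(3,3))$ and $((1,1,1),(2,2,2))$ (via a line and a twisted cubic on $X$) is correct, and sharper than the paper's ``general''. Your A/B labels are reversed: shifting $((1,1),(2,3))$ by $k$ gives resolution A, and its dual $((1,2),(3,3))$ gives B. This is harmless.

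\textbf{The gap: case (iii).} The paper never treats case (iii) explicitly; it only reads off the case (ii) resolutions. So this is where your write-up has to go beyond it, and your plan for it fails. For $((1,1,1),(2,2,2))$, item (2)(iii) of Theorem~1 produces $0\to\oo_{\PPP}(-3)^{2}\to\oo_{\PPP}(-2)^{3}\to\ii_C\to0$. These are the twisted cubics on $X$: ACM, and not complete intersections. They cannot be absorbed into the $M_1/M_2$ list. From the Betti numbers, a curve with resolution A has degree $3k+1$ and one with resolution B has degree $3k+2$, never $3$. So ``case (iii) contributes nothing new'' is false, and your argument as planned would drop the twisted cubics.

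\textbf{The range of $k$.} Your minimality argument here is also wrong. Take $k=1$ in item (2)(ii). All three syzygies then have degree $3=\deg F$, so the row of the $4\times 3$ Hilbert--Burch matrix belonging to $F$ consists of constants. Minimality forces that row to be zero, and then the three quadric minors vanish. Hence no curve has that \emph{minimal} resolution. The same happens for A and B at $k=0$. The honest minimal ranges are $k\geq 1$ for A and B and $k\geq 2$ for the $3\times 3$ case.

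\textbf{How the statement must be read.} The statement is correct only if these boundary instances are read as non-minimal resolutions, with the constant components nonzero. Cancel the trivial summand $\oo_{\PPP}(-3)$ in each:
\begin{itemize}
\item A and B at $k=0$ collapse to a line and a plane conic. These are complete intersections, so they are harmless.
\item Item (2)(ii) at $k=1$ collapses exactly to the twisted cubic. The nonzero constant vector in $\oo_{\PPP}(-3)^{3}\to\oo_{\PPP}(-3)$ is the syzygy expressing $F=\sum\ell_iQ_i$.
\end{itemize}
That is where the statement hides the only non-complete-intersection curve coming from case (iii). You should prove the corollary in that form, or list the twisted cubics explicitly. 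Do not try to absorb case (iii) into case (ii) of the length-two pairs.
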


These resolutions coincide with those obtained in \cite{mwa}*{Prop. 4.8}. Finally, the resolutions in the singular case can be obtained analogously to the quadric case described in Corollary~\ref{singularcuadric}.

\section{Degree four} \label{sec::3}

In this section, we focus on the case of ACM curves contained in a surface of degree four. Before computing all possible weak admissible pairs, we first state some useful properties that are direct consequences of the definition of a weak admissible pair.

\begin{rmk} \label{Obsparejas}
Let $(\hat{a}, \hat{b})$ be a weak admissible pair of degree $d$ and length $t$.
\begin{enumerate}
    \item $b_{j}-a_{i}>0$ for all $i\leq j$.
    \item If $b_{j}-a_{i}=c$, then $b_{j-k}-a_{i+l}\leq c$ for all $0\leq k\leq j-1$ and $0\leq l \leq t-i$.
    \item If $b_{j}-a_{i}\geq d$, then $b_{i}-a_{j}\leq 0$.
    \item Finding all weak admissible pairs of degree $d$ reduces to computing all possible degree matrices with trace $d$.
\end{enumerate}
\end{rmk}

\subsection{Weak admissible pairs of degree four}

Our goal is to determine all weak admissible pairs of degree four. We proceed by analyzing cases according to the length $t$. For each $t$, we follow these steps:
\begin{enumerate}[label=(\roman*)]
    \item Impose the trace condition $\sum_{i=1}^t m_{ii}=4$ with $m_{ii}>0$.
    \item Assume $a_1=0$ (normalization).
    \item Study the zero patterns in the lower triangular part of the degree matrix $M$.
\end{enumerate}

\vskip2mm
\noindent We now analyze each case according to the length $t$.

\subsubsection{\textbf{Case $t=2$}}

The degree matrix has the form $M=(m_{ij})$, with $m_{11}+m_{22}=4,\, m_{12}>0$ and $m_{21}\geq 0.$

If $m_{21}>0$, then the pair is an admissible pair in the sense of \cite{LLV}, already classified as follows:
$$ M_{1}=\left( \begin{array}{cc}
1 & 3\\
1 & 3
\end{array}
    \right)  ,\quad ((1,1),(2,4)).$$
  $$ M_{2}=\left( \begin{array}{cc}
3 & 3\\
1 & 1
\end{array}
    \right)  ,\quad ((1,3),(4,4)). $$  

    $$ M_{3}=\left( \begin{array}{cc}
2 & 3\\
1 & 2
\end{array}
    \right)  ,\quad ((1,2),(3,4)).$$
        $$ M_{4}=\left( \begin{array}{cc}
2 & 2\\
2 & 2
\end{array}
    \right)  ,\quad ((1,1),(3,3)). $$
We therefore assume $m_{21}=0$ in order to obtain new weak admissible pairs.

\begin{itemize}
\item \textbf{$m_{11}=m_{22}=2$.}  
From $m_{11}=b_1=2$ and the inequality $4=m_{11}+m_{22}\leq m_{12}=b_2$, we obtain $b_2=4+n$ for $n\in\mathds{N}$, hence $a_2=2+n$. This yields the weak admissible pair
\[
M_5(n)=\begin{pmatrix}2&4+n\\0&2\end{pmatrix},\quad ((0,2+n),(2,4+n)),\ n\geq 0.
\]

\item \textbf{$m_{11}=3,\ m_{22}=1$.}  
Then $b_1=3$ and $4\leq b_2$, so $b_2=4+n$ and $a_2=3+n$. This gives
\[
M_6(n)=\begin{pmatrix}3&4+n\\0&1\end{pmatrix},\quad ((0,3+n),(3,4+n)),\ n\geq 0.
\]

\item \textbf{$m_{11}=1,\ m_{22}=3$.}  
This case is obtained by anti-transposition of $M_6(n)$, yielding
\[
M_7(n)=\begin{pmatrix}1&4+n\\0&3\end{pmatrix},\quad ((0,1+n),(1,4+n)),\ n\geq 0.
\]
\end{itemize}

\subsubsection{\textbf{Case $t=3$}}

The degree matrix has the form $M=(m_{ij})$, with $m_{11}+m_{22}+m_{33}=4.$

If all $m_{ij}>0$ for $j<i$, then if the pair is admissible:
\[
M_{8}=\begin{pmatrix}2&2&2\\1&1&1\\1&1&1\end{pmatrix},
\quad ((1,2,2),(3,3,3)),
\]
\[
M_{9}=\begin{pmatrix}1&1&2\\1&1&2\\1&1&2\end{pmatrix},
\quad ((1,1,1),(2,2,3)).
\]

To find new weak admissible pairs, we analyze matrices with zeros below the diagonal.

\begin{itemize}
    \item \textit{\textbf{Configuration $(2, 1, 1)$}}\\
Assume $(m_{11}, m_{22}, m_{33}) = (2, 1, 1)$ with $b_1 = 2$.

\begin{itemize}
    \item \textbf{$m_{31} = m_{32} = 0,\ m_{21} > 0$}: Solving the conditions yields the following weak admissible pair:
    \[
    M_{10}(n) = \begin{pmatrix}2 & 2 & 3 + n\\ 1 & 1 & 2 + n\\ 0 & 0 & 1\end{pmatrix},\quad ((0, 1, 2 + n),(2, 2, 3 + n)), n\geq 0.
    \]
    
    \item \textbf{$m_{31} = m_{21} = 0,\ m_{32} > 0$}: This case leads to the weak admissible pair:
    \[
    M_{11}(n) = \begin{pmatrix}2 & 3 + n & 3 + n\\ 0 & 1 & 1\\ 0 & 1 & 1\end{pmatrix},\quad ((0, 2 + n, 2 + n),(2, 3 + n, 3 + n)), n\geq 0.
    \]
    
    \item \textbf{$m_{31} = m_{32} = m_{21} = 0$}: Two distinct families of weak admissible pairs arise:
    \begin{align*}
    M_{12}(n) &= \begin{pmatrix}2 & 3 + n & 4 + n\\ 0 & 1 & 2 \\ 0 & 0 & 1\end{pmatrix},\quad ((0, 2 + n, 3 + n), (2, 3 + n, 4 + n)), n\geq 0\\
    M_{13}(m, n) &= \begin{pmatrix}2 & 3 + m & 3 + n\\ 0 & 1 & 1 + n - m\\ 0 & 0 & 1\end{pmatrix},\quad ((0, 2 + m, 2 + n), (2, 3 + m, 3 + n)),\ 0\leq m < n.
    \end{align*}
\end{itemize}

\item The diagonal distribution $(1, 1, 2)$ is obtained by anti-transposition of the previous cases, yielding additional families of weak admissible pairs:

$$
M_{14}(n)=\left( \begin{array}{ccc}
1 & 1 & 3+n\\
1 & 1 & 3+n \\
0 & 0 & 2
\end{array}
\right),
\quad
((0,0,1+n),(1, 1, 3+n)),\ n\geq 0.
$$ 

$$
M_{15}(n)=\left( \begin{array}{ccc}
1 & 2+n & 3+n\\
0 & 1 & 2 \\
0 & 1 & 2
\end{array}
\right),
\quad 
((0,1+n, 1+n),(1, 2+n, 3+n)),\ n\geq 0.
$$

$$
M_{16}(n)=\left( \begin{array}{ccc}
1 & 2 & 4+n\\
0 & 1 & 3+n \\
0 & 0 & 2
\end{array}
\right),
\quad
((0,1, 2+n),(1, 2, 4+n)),\ n\geq 0.
$$

$$
M_{17}(m,n)=\left( \begin{array}{ccc}
1 & 1+m & 3+n\\
0 & 1 & 3+n-m \\
0 & 0 & 2
\end{array}
\right),
\quad
((0, m, 1+n),(1, 1+m, 3+n)),\ 1 \leq m<n.
$$

\item \textit{\textbf{Configuration $(1, 2, 1)$}} \\
Assume $(m_{11}, m_{22}, m_{33}) = (1, 2, 1)$ with $b_1 = 2$.

\begin{itemize}
    \item \textbf{$m_{31}= 0,\ m_{32},m_{21} > 0$}: Solving the conditions yields the weak admissible pair:
    \[
    M_{18} = \begin{pmatrix}1 & 2 & 2\\
1 & 2 & 2 \\
0 & 1 & 1 \end{pmatrix},\quad ((0,0,1),(1,2, 2)).
    \]

    \item \textbf{$m_{31} = m_{32} = 0,\, m_{21}>0$}: Exactly one zero pattern produces weak admissible pairs:
    \[
    M_{19}(n) = \begin{pmatrix}1 & 2 & 3+n\\
1 & 2 & 3+n \\
0 & 0 & 1\end{pmatrix},\quad ((0,0, 2+n),(1, 2, 3+n)),\ n\geq 0.
    \]

    \item \textbf{$m_{31} = m_{21} = 0,\, m_{32}>0$}: This case is obtained by anti-transposition of the previous one, yielding additional families of weak admissible pairs:
    \[
    M_{20}(n) = \begin{pmatrix}1 & 3+n & 3+n\\
0 & 2 & 2 \\
0 & 1 & 1
\end{pmatrix},\quad ((0,1+n, 2+n),(1, 3+n, 3+n)),\ n\geq 0.
    \]

    \item \textbf{$m_{31} = m_{32} = m_{21} = 0$}: In this case, we obtain a family of weak admissible pairs:
    \begin{align*}
    M_{21}(n) &= \begin{pmatrix}1 & 3+m & 4+n\\
0 & 2 & 3+n-m \\
0 & 0 & 1\end{pmatrix},\quad ((0,1+m, 3+n),(1, 3+m, 4+n)),\ n\geq 0.
    \end{align*}
\end{itemize}
\end{itemize}

\subsubsection{\textbf{Case $t=4$}}

The degree matrix has the form $M=(m_{ij})$, with $\sum_{i=1}^4 m_{ii}=4.$

Since $m_{ii}>0$ for all $i$, the trace condition forces
\[
m_{11}=m_{22}=m_{33}=m_{44}=1.
\]

\noindent This follows because the minimal sum of four positive integers is $4$, which is achieved uniquely when each of them equals $1$.

\vskip2mm

\noindent If all $m_{ij} > 0$ for $j < i$, we obtain the unique admissible matrix:
$$
M_{22}=\left( \begin{array}{cccc}
1 & 1 & 1 & 1\\
1 & 1 & 1 & 1 \\
1 & 1 & 1 & 1 \\
1 & 1 & 1 & 1 
\end{array}
\right),\quad ((1, 1, 1, 1),(2, 2, 2, 2)).
$$

To find weak admissible pairs, we examine matrices with zeros below the diagonal:

\begin{itemize}
    \item \textbf{three zeros}: Two distinct zero patterns yield weak admissible pairs:
    \[
    M_{23}(n) = \begin{pmatrix}
  1 & 1 & 1 & 2+n\\
1 & 1 & 1 & 2+n \\
1 & 1 & 1 & 2+n \\
0 & 0 & 0 & 1
    \end{pmatrix},\quad 
    ((0, 0, 0, 1 + n), (1, 1, 1, 2 + n)),\ n\geq 0.
    \]
    \[
    M_{24}(n) = \begin{pmatrix}
   1 & 2+n & 2+n & 2+n\\
0 & 1 & 1 & 1 \\
0 & 1 & 1 & 1 \\
0 & 1 & 1 & 1
    \end{pmatrix},\quad ((0, 1 + n, 1 + n, 1 + n), (1, 2 + n, 2 + n, 2 + n)),\ n\geq 0.
    \]
    
    \item \textbf{four zeros}: Exactly one zero pattern produces weak admissible pairs:
    \[
    M_{25}(n) = \begin{pmatrix}
   1 & 1 & 2+n & 2+n \\
1 & 1 & 2+n & 2+n \\
0 & 0 & 1 & 1 \\
0 & 0 & 1 & 1 
    \end{pmatrix},\quad ((0, 0, 1 + n, 1 + n), (1, 1, 2 + n, 2 + n)),\ n\geq 0.
    \]
    
    \item \textbf{five zeros}: Two families of weak admissible pairs arise:
    \begin{align*}
    M_{26}(m, n) &= \begin{pmatrix}
   1 & 1 & 2+m & 3+n\\
1 & 1 & 2+m & 3+n \\
0 & 0  & 1 & 2+n-m \\ 
0 & 0 & 0 & 1
    \end{pmatrix},\quad ((0, 0, 1 + m, 2 + n), (1, 1, 2 + m, 3 + n)),\ 0\leq m \leq n\\
    M_{27}(m, n) &= \begin{pmatrix}
    1 & 2+m & 3+n & 3+n\\
0 & 1 & 2+n-m & 2+n-m \\
0 & 0  & 1 & 1 \\ 
0 & 0 & 1 & 1
    \end{pmatrix},
    \begin{array}{c}
     ((0, 1 + m, 2 + n, 2 + n), (1, 2 + m, 3 + n, 3 + n))\\
    0\leq m \leq n
    \end{array}
    \end{align*}
    
    \item \textbf{six zeros}: The maximal zero pattern yields the most general family of weak admissible pairs:
    \[
    M_{28}(k, m, n) = \begin{pmatrix}
 1 &  2+k & 3+m & 4+n\\
0 & 1 & 2+m-k & 3+n-k \\
0 & 0  & 1 & 2+n-m \\ 
0 & 0 & 0 & 
    \end{pmatrix},
    \begin{array}{c}
 ((0, 1 + k, 2 + m, 3 + n),
    (1, 2 + k, 3 + m, 4 + n)), \\
    0\leq k \leq m \leq n
    \end{array}
    \]
\end{itemize}

\noindent From all above, we have obtained 20 infinite families of weak admissible pairs.
This classification provides an exhaustive list of all weak admissible pairs ordered by their length and corresponding degree matrix families.\\

Observe that $M_{1}^{a}=M_{2}, M_{3}^{a}=M_{3}, M_{4}^{a}=M_{4}, M_{5}^{a}(n)=M_{5}(n), M_{6}^{a}(n)=M_{7}(n), M_{8}^{a}=M_{9}, M_{10}^{a}(n)=M_{15}(n),  M_{11}^{a}(n)=M_{14}(n), M_{12}^{a}(n)=M_{16}(n), M_{13}^{a}(m,n)=M_{17}(n-m,n), M_{18}^{a}=M_{18}, M_{19}^{a}(n)=M_{20}(n),$ 
$ M_{21}^{a}(m,n)=M_{21}(n-m,n), M_{22}^{a}=M_{22}, M_{23}^{a}(n)=M_{24}(n), M_{25}^{a}(n)=M_{25}(n), M_{26}^{a}(m,n)=M_{27}(n-m,n),$
 and $M_{28}^{a}(k,m,n)=M_{28}(n-m,n-k,n)$ for all $k,m,n\in{\mathds{N}^{*}}$ with $k<m<n$.

\begin{cor} \label{Xsuave}
\begin{enumerate}
    \item The surfaces $X$ weak determinantal of type $(\hat{a},\hat{b})$ with degree matrix as $M_{5}(n),$
    $ M_{6}(n), M_{7}(n), M_{10}(n), M_{11}(n), M_{12}(n), M_{14}(m,n), M_{14}(n),$\\$M_{15}(n), M_{16}(n), M_{17}(m,n), M_{19}(n), M_{20}(n), M_{21}(n), M_{23}(n), M_{24}(n), M_{25}(n),$\\ $ M_{26}(m,n),
     M_{27}(m,n)$ or $M_{28}(k,m,n)$ are reducible.

 \item If $X$ is a smooth weak determinantal surface of degree 4, then $X$ is determinantal or of type $((0,0,1),(1,2,2))$. 
\end{enumerate}
\end{cor}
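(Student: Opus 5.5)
The plan is to prove part (1) by a block-triangular determinant argument, and then get part (2) from part (1) plus the exhaustive list of degree-four weak admissible pairs.

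\textbf{Part (1).} Let $X=V(F)$ with $F=\det(S)$, where $S=(m_{ij})$, $\deg m_{ij}=b_j-a_i$, and $m_{ij}=0$ whenever $a_i\geq b_j$. For every degree matrix in the list of part (1), the zero pattern contains a full lower-left block. That is, there is $1\leq s<t$ with $m_{ij}=0$ for all $i>s$ and $j\leq s$. I would check this family by family:
\begin{itemize}
\item for $M_5(n),M_6(n),M_7(n)$ one has $s=1$;
\item for $M_{10}(n)$ and $M_{19}(n)$ the block is rows $\{3\}$, columns $\{1,2\}$, so $s=2$;
\item for $M_{11}(n)$ and $M_{20}(n)$ the block is rows $\{2,3\}$, column $\{1\}$, so $s=1$;
\item for $M_{23}$ take $s=3$; for $M_{24}$ take $s=1$; for $M_{25}$ take $s=2$;
\item for the remaining families, which have even more zeros, any of the available cuts works.
\end{itemize}
The check uses Remark~\ref{Obsparejas}(2): if $m_{ij}=0$ with $i>j$, then every entry further down and to the left is also $0$. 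Hence it suffices to find one zero at a corner position $(s+1,s)$.

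Given such an $s$, $S$ is block upper triangular, so $F=\det(S_{11})\det(S_{22})$. Here $S_{11}$ is the upper-left $s\times s$ block and $S_{22}$ the lower-right $(t-s)\times(t-s)$ block. Their degrees are $\sum_{i\leq s}(b_i-a_i)>0$ and $\sum_{i>s}(b_i-a_i)>0$, because each diagonal entry is positive by condition (iii). If one of the two factors vanishes, then $F=0$, which is excluded. Otherwise $F$ is a product of two nonconstant forms, so $X$ is reducible.

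\textbf{Part (2).} A smooth surface in $\PPP$ is irreducible: two components would meet along a curve, and the surface is singular there. By Proposition~\ref{propnotdet}, a weak determinantal type is always a weak admissible pair of degree $4$. The classification above says every such pair has degree matrix of one of the following forms:
\begin{itemize}
\item $M_1,\dots,M_4,M_8,M_9,M_{22}$, all of whose entries are positive, so the pair is admissible in the sense of \cite{LLV} and $X$ is determinantal;
\item one of the reducible families in part (1), which is excluded for smooth $X$;
\item $M_{18}$, which is the type $((0,0,1),(1,2,2))$.
\end{itemize}
For $M_{18}$ there is no full lower-left block: $m_{21}=1$ and $m_{32}=1$ block both possible cuts, so the surface need not split. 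This leaves exactly the stated alternatives.

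\textbf{Main obstacle.} The hard step is making sure that the list really is exhaustive, and that each listed family does admit a full block cut. If some family has a zero pattern without such a block, then the factorization argument fails and part (1) needs a separate argument for it. So the main work is the case-by-case inspection of the zero patterns, together with trusting the completeness of the enumeration over $t=2,3,4$.
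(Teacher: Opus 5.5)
Your Part (1) is the paper's argument: a zero at a subdiagonal position $(s+1,s)$ propagates by Remark~\ref{Obsparejas}(2) to the whole lower-left block, and the determinant splits as $\det(S_{11})\det(S_{22})$. Your logic for Part (2) is also what the stated implication needs: a smooth $X$ cannot be of any type in (1), which leaves only the admissible matrices and $M_{18}$. The paper's written proof of (2) argues something else, namely that the admissible types and $((0,0,1),(1,2,2))$ do contain smooth members.

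\textbf{The gap.} The step that fails is ``the classification above says every such pair has degree matrix of one of the following forms''. The paper's enumeration is not exhaustive.
\begin{itemize}
\item $\hat{a}=(0,1,1,2)$, $\hat{b}=(1,2,2,3)$ is a weak admissible pair of degree $4$. Its degree matrix has zeros exactly at $(2,1),(3,1),(4,1),(4,2),(4,3)$, which matches none of $M_{22},\dots,M_{28}$.
\item For $c\geq 2$, the pair $((0,c,c+1),(1,c+1,c+3))$ is neither an $M_{16}(n)$ nor an $M_{17}(m,n)$. It would be $M_{17}(c,c)$, which lies outside the range $m<n$.
\end{itemize}
These pairs happen to be reducible by your own block criterion, so the statement survives. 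But your proof as written never reaches them, because you only quantify over the listed families.

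\textbf{The repair.} Bypass the list and apply your criterion to an arbitrary weak admissible pair of degree $4$.
\begin{itemize}
\item If some $m_{s+1,s}=0$, then $F=G_1G_2$ with $\deg G_i>0$.
\item Then $X$ is singular at every point of $V(G_1)\cap V(G_2)$, which is nonempty in $\PPP$, because $\partial_k F=G_1\,\partial_k G_2+G_2\,\partial_k G_1$ for every $k$. This also covers the non-reduced case $G_1=G_2$, which your ``two components meet along a curve'' argument does not.
\item Hence a smooth $X$ has $a_{s+1}<b_s$ for all $s$.
\item If $X$ is moreover not determinantal, then $b_1\leq a_t$, since $m_{t1}$ is the smallest entry of the degree matrix.
\end{itemize}
Now normalize $a_1=0$ and use that the trace is $4$, splitting by length.
\begin{itemize}
\item For $t=2$, the conditions $a_2<b_1$ and $b_1\leq a_2$ contradict each other.
\item For $t=4$, the trace forces $b_i=a_i+1$. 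Then $a_{s+1}<b_s$ gives $a_{s+1}\leq a_s$, so all $a_i$ are equal, and $b_1>a_4$, again a contradiction.
\item For $t=3$, you get $a_2<b_1\leq a_3<b_2$, so $m_{22}=b_2-a_2\geq 2$. The trace then forces diagonal $(1,2,1)$ with every inequality sharp, giving $(\hat{a},\hat{b})=((0,0,1),(1,2,2))$.
\end{itemize}
This proves Part (2) without relying on the completeness of the enumeration, and it also makes your family-by-family check in Part (1) unnecessary.

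A minor point: the fact that the type of a degree-$4$ weak determinantal surface is a weak admissible pair of degree $4$ comes from Definition~\ref{defaddpair}, not from Proposition~\ref{propnotdet}.
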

\begin{proof}
Let $(\hat{a},\hat{b})$ be a weak admissible pair of degree $4$ and length $t$, and let $M=M_{(\hat{a},\hat{b})}=(m_{ij})$ be its degree matrix. Let $X$ be a weak determinantal surface of type $(\hat{a},\hat{b})$. By definition, there exists a matrix $N=(n_{ij})$ such that each $n_{ij}$ is a homogeneous polynomial of degree $m_{ij}$, with $n_{ij}=0$ if $m_{ij}=0$, and $X=V(\det(N))$.

Item (1) follows directly from Remark~\ref{Obsparejas}. If there exists $i\in\{1,\ldots,t\}$ such that $m_{(i+1)i}=0$, then $n_{(i+1+k)(i-l)}=0$ for all $0\leq k\leq t-i-1$ and $0\leq l\leq i-1$. Consequently,
\[
\det(N)=\det(N_1)\cdot\det(N_2),
\]
where $N_1=(n_{jl})_{j,l=1}^{i}$ and $N_2=(n_{pq})_{p,q=i+1}^{t}$, and therefore $X$ is reducible.

For item (2), it suffices to show that for any admissible pair, as well as for the pair $((0,0,1),(1,2,2))$, there exists a smooth element. In the admissible case, this follows from \cite{LLV}*{Prop. 1.1}. The remaining case can be verified directly using \texttt{Macaulay2}.\\
\end{proof}

\subsection{Determinantal surfaces}

By Corollary~\ref{Xsuave}, any smooth weak determinantal surface that is not of type
$((0,0,1),(1,2,2))$ is a determinantal surface. That is, $X$ is weak determinantal of
type $(\hat{a},\hat{b})$ satisfying $a_i<b_j$ for all $i,j$. These surfaces give rise
to five irreducible components of the Noether--Lefschetz locus
\[
NL(4)\subseteq |\mathcal{O}_{\mathbb{P}^3}(4)|,
\]
all of codimension $1$.

\begin{thm}[\cite{LLV}]\label{thm::quarticDivisors}
The family of determinantal quartic surfaces consists of five prime divisors
$\mathcal{F}_1,\ldots,\mathcal{F}_5\subseteq|\oo_{\PP^3}(4)|$, each corresponding to an
admissible pair $(\hat{a},\hat{b})$ of degree $4$, each divisor $\mathcal{F}_i$ coincides with a classical family of quartic
surfaces, namely those containing the following curves:
\begin{enumerate}
    \item[$\mathcal{F}_1:$] a curve of degree $6$ and genus $3$ with resolution
    \[
    0\to\oo_{\PP^3}(-4)^3\to\oo_{\PP^3}(-3)^4;
    \]
    \item[$\mathcal{F}_2:$] a twisted cubic;
    \item[$\mathcal{F}_3:$] a complete intersection of two quadric surfaces;
    \item[$\mathcal{F}_4:$] a line;
    \item[$\mathcal{F}_5:$] a conic.
\end{enumerate}
\end{thm}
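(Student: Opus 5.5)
The plan is to first list the admissible pairs of degree $4$, i.e. those weak admissible pairs with $a_i<b_j$ for all $i,j$. Then I would attach to each pair a classical curve through a Hilbert--Burch construction, and finally prove the divisor statement by a dimension count.

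\textbf{Step 1: the five admissible kinds.} The case analysis of Section~\ref{sec::3} shows that the admissible pairs are exactly those with degree matrices $M_1,M_2,M_3,M_4,M_8,M_9,M_{22}$. By Remark~\ref{remadmpar}(1), dual pairs define the same surfaces, and we have $M_1^{a}=M_2$ and $M_8^{a}=M_9$. Hence there are at most five families:
\[
\{((1,1),(2,4)),((1,3),(4,4))\},\quad ((1,2),(3,4)),\quad ((1,1),(3,3)),
\]
\[
\{((1,1,1),(2,2,3)),((1,2,2),(3,3,3))\},\quad ((1,1,1,1),(2,2,2,2)).
\]

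\textbf{Step 2: identify the curves.} Let $X=V(\det N)$ be determinantal of type $(\hat a,\hat b)$ with $N$ general. I would delete one column of $N$ and take the ideal of maximal minors of the remaining $t\times(t-1)$ matrix. For general entries this defines an ACM curve $C$ by Hilbert--Burch. Moreover $\det N\in I_C$, by expanding along the deleted column. The curve for each pair is as follows.
\begin{itemize}
\item For $((1,1,1,1),(2,2,2,2))$, the $4\times3$ linear matrix gives the resolution $0\to\oo(-4)^3\to\oo(-3)^4$, a curve of degree $6$ and genus $3$. This is $\mathcal F_1$.
\item For $((1,1,1),(2,2,3))$, removing the cubic column leaves a $3\times 2$ linear matrix, which defines a twisted cubic. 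This is $\mathcal F_2$.
\item For $((1,1),(3,3))$, a column of two quadrics gives a complete intersection of two quadrics. This is $\mathcal F_3$.
\item For $((1,1),(2,4))$, a column of two linear forms gives a line. This is $\mathcal F_4$.
\item For $((1,2),(3,4))$, removing the second column leaves a column of forms of degrees $2$ and $1$, giving a plane conic. This is $\mathcal F_5$.
\end{itemize}

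Conversely, let $X$ be a quartic containing such a curve $C$. Write $F=\sum(-1)^i m_iF_i$ as in Case~2 of Proposition~\ref{propnotdet}. This exhibits $F$ as the determinant of the Hilbert--Burch matrix completed by one column. Reading off the degrees recovers the corresponding admissible pair. So each family of determinantal quartics equals the locus of quartics containing the listed curve.

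\textbf{Step 3: prime divisors.} Each family is the image of an irreducible parameter space, namely the affine space of matrices with entries of the prescribed degrees, so its closure is irreducible. To get codimension $1$ in $|\oo_{\PP^3}(4)|=\PP^{34}$, I would use the incidence correspondence
\[
\{(C,X): C\subset X\}.
\]
Its dimension is the dimension of the Hilbert scheme component plus $h^0(\ii_C(4))-1$. I then need to check that a general $X$ in the image contains only finitely many such $C$, or more generally a family of dimension $h^0(\oo_X(C))-1$. For lines, for example, the count is $4+29=33$, with finite fibers. Similar counts are needed for the conic, the twisted cubic, the $(2,2)$ complete intersection and the sextic, where I would subtract $\dim|C|$ on $X$ using Riemann--Roch on the K3 surface.

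\textbf{Main obstacle.} The hard part is Step 3. One must show that the family is a proper divisor, not all of $\PP^{34}$, and that the five families are pairwise distinct. For properness I would invoke Noether--Lefschetz: a very general quartic has $\mathrm{Pic}=\mathds Z H$ and hence contains no non-complete-intersection curve. For distinctness I would compare the lattice $\langle H,C\rangle$ on a very general member. Its discriminant $4C^2-(C\cdot H)^2$, with $C^2=2g-2$, takes different values on the five families. This shows that no family is contained in another.
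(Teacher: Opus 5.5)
The paper does not prove this statement; it quotes it from \cite{LLV}, together with the fact, stated right after it, that a very general $X_i\in\mathcal{F}_i$ has $\mathrm{Pic}(X_i)=\mathds{Z}H\oplus\mathds{Z}C_i$. So your argument has to stand on its own, and most of it does.

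\textbf{What works.} Steps~1 and~2 are sound. A direct count with $a_t<b_1$ gives exactly your seven pairs. Your converse is exactly the Hilbert--Burch completion from Case~2 of Proposition~\ref{propnotdet}. The incidence count of Step~3 also goes through once carried out. For the line, conic, twisted cubic, $(2,2)$ complete intersection and sextic, the incidence varieties have dimensions $4+29$, $8+25$, $12+21$, $16+18$ and $24+12$. On a smooth member the fibre has dimension $\dim|C|=0,0,0,1,3$, by Riemann--Roch on the K3; the other classes with the same numerics, such as $2H-C$ and $3H-C$, give linear systems of the same dimension. So every image has dimension $33$. In particular, properness already follows from the count, and you do not need Noether--Lefschetz for it.

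\textbf{The gap: distinctness.} The integer $4C^2-(C\cdot H)^2$ is an invariant of $X$ only if $\langle H,C\rangle$ is all of $\mathrm{Pic}(X)$. Suppose you use only that a very general member of a divisor has Picard number $2$; you should also justify this, e.g.\ because the locus of Picard number $\ge 3$ has codimension $\ge 2$. Then $\mathcal{F}_i=\mathcal{F}_j$ only tells you that $\langle H,C_i\rangle$ and $\langle H,C_j\rangle$ are finite-index sublattices of the same rank-two lattice. Their discriminants then need only agree up to a rational square. Your values $-20,-17,-16,-9,-12$ are distinct modulo squares except for $-16=(4/3)^2\cdot(-9)$, which is exactly $\mathcal{F}_3$ versus $\mathcal{F}_4$.

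This is not an artifact of the method. Take the hyperbolic plane $U=\mathds{Z}e\oplus\mathds{Z}f$ with $e^2=f^2=0$ and $e\cdot f=1$. The classes $H=e+2f$, $L=e-f$ and $E=4f$ have the intersection numbers of a hyperplane section, a line and a $(2,2)$ complete intersection. Moreover $\langle H,L\rangle$ and $\langle H,E\rangle$ have index $3$ and $4$ in $U$.

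\textbf{How to close it.} You need one further input. One option is the integral statement $\mathrm{Pic}(X_i)=\mathds{Z}H\oplus\mathds{Z}C_i$, but that is at least as deep as what you are proving. The other is to exclude this overlattice geometrically. Suppose $L,E\in\mathrm{Pic}(X)$ with $\mathrm{Pic}(X)$ of rank two. Solving $E\cdot H=4$ and $E^2=0$ gives $E=\tfrac{4}{3}(H-L)$ or $E=\tfrac{2}{3}(H+2L)$. In both cases $D=(H-L)/3$ lies in $\mathrm{Pic}(X)$, with $D^2=0$ and $D\cdot H=1$. Riemann--Roch then gives $h^0(D)\ge 2$. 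This is impossible on a smooth quartic, because an effective divisor of degree $1$ is a single line, and a line is rigid. With this added, together with the Picard-number-two input, your route gives a proof independent of \cite{LLV}.
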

All these cases are particularly interesting, since for a very general smooth element
of each divisor the Picard group has rank two. More precisely, let
$X_i\in\mathcal{F}_i$ ($1\leq i\leq5$) be a very general smooth surface and
$C_i\subseteq X_i$ the corresponding curve listed above, with degree $d_i$ and genus
$g_i$. Then $Pic(X_i)$ is generated by the hyperplane section $H$ and the class $[C_i]$,
and the intersection matrix with respect to this basis is
\[
A_i=
\begin{pmatrix}
4 & d_i\\
d_i & 2g_i-2
\end{pmatrix}.
\]

These properties motivate us to give a complete description of the ACM curves on a
very general element of each component.

\subsubsection{\textbf{Quartic surfaces containing a line}}

Let $X\in\mathcal{F}_4$ be a very general smooth quartic surface. Denote by $L=[C_4]$
the class of the line contained in $X$. Then $Pic(X)$ is generated by $H$ and $L$ with
\[
H^2=4,\quad H\cdot L=1,\quad L^2=-2.
\]

\begin{prop}\label{prop1}
Let $X=V(F)\in\mathcal{F}_4$ be a very general smooth quartic surface and let
$D\subseteq X$ be a curve such that $D\not\sim H$. Write $[D]=aH+bL$ in $Pic(X)$. Then
$D$ is an ACM curve if and only if one of the following cases occurs:
\begin{enumerate}[label=(\roman*)]
    \item If $|D-H|=\emptyset$:
    \begin{enumerate}
        \item[(a)] $[D]=L$, and the unique curve in this class is the line $C_4$.
    \end{enumerate}

    \item If $F$ is a minimal generator of $\ii_D$ and $|D-H|\neq\emptyset$:
    \begin{itemize}
        \item[(a)] $[D]=kH+L$ with $k\geq3$. These curves have degree $4k+1$, genus
        $2k^2+k$, and minimal free resolution
        \[
        0\to \oo_{\PPP}(-(4+k))\oplus \oo_{\PPP}(-(2+k))
        \to \oo_{\PPP}(-(1+k))^2\oplus \oo_{\PPP}(-4)
        \to \ii_D\to0.
        \]
        \item[(b)] $[D]=(k+1)H-L$ with $k\geq3$. These curves have degree $4k+3$, genus
        $2k^2+3k+1$, and minimal free resolution
        \[
        0\to \oo_{\PPP}(-(4+k))^2
        \to \oo_{\PPP}(-(1+k))\oplus \oo_{\PPP}(-(3+k))\oplus \oo_{\PPP}(-4)
        \to \ii_D\to0.
        \]
    \end{itemize}

    \item If $D$ is a plane curve and $|D-H|\neq\emptyset$:
    \begin{itemize}
        \item[(a)] $[D]=H-L$, and $D$ is a plane cubic residual to $C_4$ in the complete
        intersection of $X$ with a plane containing $C_4$.
    \end{itemize}

    \item If $D$ is contained in a quadric surface and $|D-H|\neq\emptyset$:
    \begin{itemize}
        \item[(a)] $[D]=2H$, a curve of degree $8$ and genus $9$;
        \item[(b)] $[D]=2H-L$, residual to $C_4$;
        \item[(c)] $[D]=H+L$, residual to a plane cubic contained in $X$.
    \end{itemize}

    \item If $D$ is contained in a cubic surface and $|D-H|\neq\emptyset$:
    \begin{itemize}
        \item[(a)] $[D]=3H$, a curve of degree $12$ and genus $19$;
        \item[(b)] $[D]=3H-L$, residual to $C_4$;
        \item[(c)] $[D]=2H+L$, residual to a plane cubic.
    \end{itemize}

    \item $D$ is a complete intersection of $X$ with a surface of degree $d\geq4$; in
    this case $[D]=dH$.
\end{enumerate}
\end{prop}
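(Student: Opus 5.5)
We plan to combine Theorem 1 with the explicit knowledge of $\mathrm{Pic}(X)=\mathds{Z}H\oplus\mathds{Z}L$. By Corollary~\ref{Xsuave} and Theorem~\ref{thm::quarticDivisors}, a very general $X\in\mathcal{F}_4$ is weak determinantal only of the type $((1,1),(2,4))$ and its dual $((1,3),(4,4))$; these are the pairs with degree matrices $M_1$ and $M_2$. We also have to exclude the non-determinantal type $((0,0,1),(1,2,2))$. Such a surface would contain an ACM curve whose class is not in $\mathds{Z}H+\mathds{Z}L$ with the right intersection numbers. We will check this by computing the degree and genus of the curve attached to that resolution: its discriminant $H^2D^2-(H\cdot D)^2$ should not equal $-9$ times a square. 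That is what very generality rules out.

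Theorem 1 then leaves four kinds of ACM curve $D\subseteq X$. \emph{Complete intersections} $X\cap S_e$ have class $eH$; $e=2,3$ give (iv)(a) and (v)(a), and $e\ge4$ gives (vi). Curves with \emph{$F$ a minimal generator} have resolutions
\[
0\to\oo(-(2+k))\oplus\oo(-(4+k))\to\oo(-(1+k))^2\oplus\oo(-4)\to\ii_D\to0
\]
and the dual one; these are cases (ii)(a), (ii)(b). Curves with \emph{$F$ not minimal} come from case (iii) of Theorem 1 with $b_{j_0}$ replaced by $4$. This forces $D$ to lie on a surface of degree $<4$: a plane, a quadric or a cubic, giving the linked curves in (iii)--(v). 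Finally there is the degenerate range $k$ small, where $|D-H|=\emptyset$. For each resolution we compute $\deg D$ and $p_a(D)$ from the Betti numbers. We then solve $H\cdot D=4a+b$ and $D^2=2p_a-2=8a^2+2ab-2b^2$ for $(a,b)$. For instance, the first resolution gives degree $4k+1$ and genus $2k^2+k$, which matches $kH+L$.

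We then identify the classes geometrically through linkage. A plane through $C_4$ cuts $X$ in $C_4$ plus a cubic of class $H-L$. More generally, linking by $X\cap S_e$ sends $D$ to $eH-D$, and Rao modules are preserved up to dual and shift, as in Proposition~\ref{propclases}. This produces $2H-L$, $H+L$, $3H-L$ and $2H+L$ as residuals of $L$ and $H-L$. Conversely, every class with the computed numerics is effective and ACM: any curve in that class has the same Rao module by Proposition~\ref{propclases}, namely zero. The lower bounds $k\ge3$ in (ii) arise because for $k\le2$ the curve lies on a surface of degree $<4$ (so $F$ is not minimal), and those curves are already listed in (iii)--(v).

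The main obstacle is the bookkeeping needed for exhaustiveness and uniqueness. We must show that each admissible resolution gives exactly one class, with no sign ambiguity in $b$; this uses $L^2=-2$ and the quadratic equation. We must also handle the boundary cases with $|D-H|=\emptyset$: here $D\cdot H<4$ effectively forces $D=L$, using the fact that the line is the unique curve in its class because $L^2<0$. Finally, we must rule out the extra weak type $((0,0,1),(1,2,2))$ for very general $X$. For that step we will first try the discriminant argument above, and fall back on a dimension count in the Noether--Lefschetz locus if the discriminant argument does not work.
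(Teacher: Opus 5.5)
Your plan for (ii)--(vi) follows the paper: Theorem~1 for the types $((1,1),(2,4))$ and $((1,3),(4,4))$, degree and genus from the Betti numbers, solving for $(a,b)$ in $\mathrm{Pic}(X)$, Proposition~\ref{propclases} for the converse, and linkage for the geometric descriptions. You go further than the paper in explicitly excluding the type $((0,0,1),(1,2,2))$, which the paper tacitly assumes away, and your test works. For $D=aH+bL$ one has $(D\cdot H)^2-4D^2=9b^2$. Deleting the last column of a matrix of that type gives an ACM curve of degree $4$ and $p_a=1$, for which this quantity is $16$, not of the form $9b^2$. For (i) the paper uses Watanabe's numerical criterion \cite{kwa}, whereas you read (i) off the list produced by Theorem~1; that route is also fine. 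The problem is the conclusion you aim for there.

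The step ``$|D-H|=\emptyset$ forces $D\cdot H<4$ and hence $D=L$'' is false. The plane cubic residual to $C_4$ has class $H-L$. Since $(H-L)-H=-L$ is not effective, $|D-H|=\emptyset$, yet this curve is ACM with $D\cdot H=3$ and $D^2=0$. Carrying out your own method gives the right answer. Theorem~1 produces only the classes $kH+L$, $(k+1)H-L$ ($k\ge0$) and $eH$, and $D-H$ fails to be effective exactly for $L$ and $H-L$. So item (i) as written cannot be proved; the statement is internally inconsistent, since it puts $H-L$ under (iii) with $|D-H|\neq\emptyset$. You should record the correct answer $\{L,H-L\}$ rather than force $D=L$. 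The paper's proof has the same defect. Watanabe's case $D^2=0$, $D\cdot H=3$ yields exactly $(a,b)=(1,-1)$, which the paper then discards as a ``genus $0$'' twisted cubic, although $D^2=0$ means $p_a=1$. Note also that $|D-H|=\emptyset\Rightarrow D\cdot H<4$ is not a general fact: Watanabe's list contains initialized ACM classes of degree $5$ and $6$. On this $X$ it holds only as a by-product of the enumeration.

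There are two smaller slips. First, $D^2=4a^2+2ab-2b^2$, not $8a^2+2ab-2b^2$; with your formula the claimed match of degree $4k+1$ and genus $2k^2+k$ with $kH+L$ fails. Second, your explanation of the bound $k\ge3$ reverses an implication. If $F$ is not a minimal generator, then $\ii_D$ has a generator of degree $\le3$, but the converse fails. For the curves of class $H+L$ ($k=1$), the resolution $0\to\oo_{\PPP}(-3)\oplus\oo_{\PPP}(-5)\to\oo_{\PPP}(-2)^2\oplus\oo_{\PPP}(-4)\to\ii_D\to0$ has no entry of degree $0$. So $F$ is a minimal generator even though $D$ lies on two quadrics. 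The bound $k\ge3$ in (ii) is only a convention to avoid overlap with (iv)--(v).
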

\begin{proof}
For item (i), we follow the conditions of \cite{kwa}*{Thm.1.1}.

\begin{itemize}
    \item $D^{2}=-2$ and $1\leq D.H\leq 3$.\\
    From the second condition we obtain
    $1\leq 4a+b\leq 3$, hence $1-4a\leq b\leq 3-4a$.
    Since $b\in\mathbb{Z}$, we consider the following cases:
    \begin{itemize}
        \item[$b=1-4a$] $ $\\
        From the first equation we have
        \begin{align*}
            -2=D^{2}
            &=4a^{2}+2ab-2b^{2}\\
            &=4a^{2}+2a(1-4a)-2(1-4a)^{2}\\
            &=-36a^{2}+18a-2.
        \end{align*}
        Hence the only integer solution is $a=0$, $b=1$, and therefore $D=L$.

        \item[$b=2-4a$] $ $\\
        In this case,
        \[
        -2=D^{2}=-36a^{2}+36a-8,
        \]
        which has no integer solutions.

        \item[$b=3-4a$] $ $\\
        We obtain
        \[
        -2=D^{2}=-36a^{2}+54a-18,
        \]
        which has no integer solutions.
    \end{itemize}

    \item $D^{2}=0$ and $3\leq D.H\leq 4$.\\
    From $3\leq 4a+b\leq 4$ we obtain $3-4a\leq b\leq 4-4a$.
    Thus:
    \begin{itemize}
        \item[$b=3-4a$] $ $\\
        Then
        \[
        0=D^{2}=-36a^{2}+54a-18,
        \]
        whose only integer solution is $a=1$, $b=-1$, i.e.\ $D=H-L$.
        This divisor has degree $3$ and genus $0$, hence it is a twisted cubic.
        Therefore $X\in\mathcal{F}_2$, which is impossible since $X$ is a very
        general element of $\mathcal{F}_4$. Hence this class does not exist.

        \item[$b=4-4a$] $ $\\
        In this case
        \[
        0=D^{2}=-36a^{2}+72a-32,
        \]
        which has no integer solutions.
    \end{itemize}

    \item $D^{2}=2$ and $D.H=5$.\\
    Then $4a+b=5$, so $b=5-4a$, and
    \[
    2=D^{2}=-36a^{2}+90a-50,
    \]
    which has no integer solutions.

    \item $D^{2}=4$, $D.H=6$, and $|D-H|=\emptyset$.\\
    Then $b=6-4a$ and
    \[
    4=D^{2}=-36a^{2}+108a-72,
    \]
    which has no integer solutions.
\end{itemize}

For item (ii), the surface $X$ is determinantal of type $((1,1),(2,4))$ and its dual
pair is $((1,3),(4,4))$. By Theorem~\ref{teoprinc}, we have two possible resolutions for
$\ii_D$.

\begin{itemize}
    \item[\textbf{Case 1:}]
    \[
    0\to \oo_{\PPP}(-(4+k))\oplus \oo_{\PPP}(-(2+k))
    \to \oo_{\PPP}(-(1+k))^{2}\oplus \oo_{\PPP}(-4)
    \to \ii_D\to0.
    \]

    Using the Betti numbers, the degree and genus of $D$ are computed as
    \begin{equation}\label{ecuagradoygenero}
        d_D=\frac{1}{2}\left(\sum_{i=1}^{2}b_i^2-\sum_{i=0}^{2}a_i^2\right),\qquad
        g_D=1+\frac{1}{6}\left(\sum_{i=1}^{2}b_i^3-\sum_{i=0}^{2}a_i^3\right)-2d_D.
    \end{equation}

    Thus $d_D=4k+1$ and $g_D=2k^2+k$. Assuming $k\geq3$, we have
    \[
    4k+1=D.H=4a+b,\qquad 2k+4k^2-2=D^2=4a^2+2ab-2b^2,
    \]
    which implies $a=k$ and $b=1$. By Proposition~\ref{propclases}, these curves are ACM.

    \item[\textbf{Case 2:}]
    \[
    0\to \oo_{\PPP}(-(4+k))^{2}
    \to \oo_{\PPP}(-(1+k))\oplus \oo_{\PPP}(-(3+k))\oplus \oo_{\PPP}(-4)
    \to \ii_D\to0.
    \]

    Using \eqref{ecuagradoygenero}, we obtain $d_D=4k+3$ and $g_D=2k^2+3k+1$.
    Assuming $k\geq3$, we have
    \[
    4k+3=D.H=4a+b,\qquad 6k+4k^2=D^2=4a^2+2ab-2b^2,
    \]
    which implies $a=k+1$ and $b=-1$. Again, Proposition~\ref{propclases} shows that
    these curves are ACM.
\end{itemize}

Case (iii). Let $D$ be a plane curve in $X$ with class $aH+bL$. Then the degree of $D$ is $4a+b$ and the genus is $\frac{(4a+b-1)(4a+b-2)}{2}$, and this is equal to $\frac{D^{2}-2}{2}$ if and only if $(a=0$ and $b=1)$, $(a=1$ and $b=0)$, or $(a=1$ and $b=-1)$. In the first case, the class of $D$ is $L$, which was already considered in item ii). In the second case, the divisor is $H$, but by hypothesis this cannot occur. Therefore, the class of $D$ is $H-L$, and this class has degree $3$ and genus $1$. 

For the existence of this type of curves in $X$, observe that the complete intersection of a plane containing the line $C_{4}$ with $X$ is the union of $C_{4}$ and a curve $A$. By \cite{mdp2}*{III, Prop. 1.2}, the degree and genus of $A$ are $d_{A}=4(1)-1=3$ and $g_{A}=\frac{(3-1)(1+4-4)}{2}+0=1$, respectively.

For cases (iv) and (v), we treat the remaining cases of item (ii) and the cases given by item (2), (iii) of Theorem \ref{teoprinc}:
\begin{itemize}
    \item[Case 1:] By item (ii), case 1, with $k=0$, we have that $D$ is the line $L$.
    
    \item[Case 2:] By item (ii), case 1, with $k=1$, we have that $D$ is a singular curve of degree $5$ and genus $3$ contained in a singular quadric, with class $H+L$. To obtain the geometric description, let $A$ be a plane curve of degree $3$ contained in $X$ (it exists by item (iii)), and let $Q$ be a quadric surface that contains $A$ and has no common components with $X$ (observe that this surface is the union of two planes). Then the complete intersection of $X$ with $Q$ is the union of $A$ and a curve $D$. By \cite{mdp2}*{III, Prop. 1.2}, the degree and genus of $D$ are $d_{D}=2(4)-3=5$ and $g_{D}=\frac{(5-3)(2+4-4)}{2}+1=3$, respectively.
    
    \item[Case 3:] By case 1 of item (ii), with $k=2$, we have that $D$ is a curve of degree $9$ and genus $10$ contained in a cubic surface, with class $2H+L$. We give a geometric description. Let $A$ be a plane curve of degree $3$ contained in $X$ (it exists by item (iii)), and let $S$ be a cubic surface that contains $A$ and has no common components with $X$. Then the complete intersection of $X$ with $S$ is the union of $A$ and a curve $D$. By \cite{mdp2}*{III, Prop. 1.2}, the degree and genus of $D$ are $d_{D}=3(4)-3=9$ and $g_{D}=\frac{(9-3)(3+4-4)}{2}+1=10$, respectively.

    \item[Case 4:] Applying item (ii), case 2, with $k=0$, we have that $D$ is the plane cubic described in item (iii).
    
    \item[Case 5:] By item (ii), case 2, with $k=1$, $D$ is a curve of degree $7$ and genus $6$ contained in a quadric, with class $2H-L$. A geometric description is as follows. Let $Q$ be a quadric surface that contains the line $C_{4}$ and has no common components with $X$. Then the complete intersection of $X$ with $Q$ is the union of $C_{4}$ and a curve $D$. By \cite{mdp2}*{III, Prop. 1.2}, the degree $d_{D}$ and genus $g_{D}$ of $D$ are $d_{D}=2(4)-1=7$ and $g_{D}=\frac{(7-1)(2+4-4)}{2}+0=6$, respectively.
    
    \item[Case 6:] Using item (ii), case 2, with $k=2$, $D$ is a curve of degree $11$ and genus $15$ contained in a cubic surface, with class $3H-L$. To obtain the geometric description, let $S$ be a cubic surface that contains $C_{4}$ and has no common components with $X$. Then the complete intersection of $X$ with $S$ is the union of $C_{4}$ and a curve $D$. By \cite{mdp2}*{III, Prop. 1.2}, the degree $d_{D}=3(4)-1=11$ and the genus $g_{D}=\frac{(11-1)(3+4-4)}{2}+0=15$.
    
    \item[Case 7:] By Theorem \ref{teoprinc}, item (2) iii), and the pair $((1,1),(2,4))$, we have the following resolutions:
    \begin{itemize}
        \item $$0\to \oo_{\PPP}(-2) \to \oo_{\PPP}(-1)^{2} \to \ii_{D}\to 0,$$
        In this case, $D$ is the line $C_{4}$.
        \item $$0\to \oo_{\PPP}(-6) \to \oo_{\PPP}(-3)^{2} \to \ii_{D}\to 0.$$
        This case coincides with Case 3.
    \end{itemize} 

    \item[Case 8:] By Theorem \ref{teoprinc}, item (2) iii), and the pair $((1,3),(4,4))$, we have the following resolution:
    $$0\to \oo_{\PPP}(-4) \to \oo_{\PPP}(-3)\oplus \oo_{\PPP}(-1) \to \ii_{D}\to 0.$$
    In a similar way to the previous cases, we compute the degree $d_{D}=3$ and the genus $g_{D}=1$, and in this case $D$ is the plane cubic described in item (iii).
\end{itemize}

Finally, to prove item (vi), note that the possible complete intersections contained in a surface $X$ of degree $4$ are either the intersections of $X$ with another surface of any degree, or the complete intersections of two surfaces of degrees $n$ and $m$ with $n,m\in\{1,2,3\}$. 

For the first type, if a curve is the complete intersection of $X$ with a surface of degree $d$, then it has degree $4d$ and genus $2d^{2}+1$. If $D=aH+bL$ is the class of this curve, then
$$
D.H=4a+b=4d \quad \text{and} \quad D^{2}=4a^{2}+2ab-2b^{2}=2d^{2}+1.
$$
This system has a unique solution given by $a=d$ and $b=0$.

Now suppose that the curve is of the second type. Note that the case $m=n=1$ corresponds to a line, the case $m=1$ and $n=3$ (and therefore the case $m=3$ and $n=1$) corresponds to a plane cubic, which was already covered in item (iii). The case $m=n=3$ corresponds to a curve of degree $9$ and genus $10$, which was covered in item (v). 

Observe that the cases $(n,m)=(1,2),(2,1),(2,2)$ are not possible, since the surface $X$ is general and therefore cannot contain conics or elliptic curves of degree $4$. Thus, the only remaining possibility is $m=2$ and $n=3$ (or $m=3$ and $n=2)$). In this case, the complete intersection is a curve of degree $6$ and genus $4$ contained in a quadric surface $Q$. If such a curve existed, we could consider the complete intersection of $Q$ with $X$, and therefore this curve would be linked to a curve of degree $4(2)-6=2$ and genus $\frac{(2-6)(4+2-4)}{2}+4=0$ contained in $X$, which is a contradiction. Hence, such a curve cannot exist.\\

\end{proof}

\begin{rmk}
\begin{enumerate}
    \item Since the equations $D^{2}=-2$ and $D.H=1$ admit a unique integer solution,
    the line $C_4$ is rigid in its numerical class. Therefore, $C_4$ is the unique line
    contained in $X$.

    \item The curve of degree $5$ and genus $3$ appearing in item (iv) cannot be
    contained in a smooth quadric surface, since ACM curves of degree $5$ on a smooth
    quadric have genus $2$. Moreover, this curve does not satisfy Castelnuovo's bound,
    since it is the union of the line $C_4$ with a plane quartic.
\end{enumerate}
\end{rmk}

\subsubsection{\textbf{Quartic surfaces containing a conic}}

Let $X\in\mathcal{F}_{5}$ be a very general smooth element. We denote the class $[C_{5}]$ by $C$. Then $Pic(X)$ is generated by $H$ and $C$, with $H^{2}=4$, $H\cdot C=2$, and $C^{2}=-2$.

\begin{prop}
Let $X=V(F)\in\mathcal{F}_{5}$ be a very general smooth quartic surface in $\PPP$, and let $D$ be a curve on $X$ such that $D\notin |H|$. Let $aH+bC$ be the class of $D$ in $Pic(X)$. Then $D$ is an ACM curve if and only if one of the following cases occurs:
\begin{enumerate}[label=(\roman*)]
    \item $|D-H|=\emptyset$:
    \begin{enumerate}
        \item[(a)] $[D]=C$ or $[D]=H-C$, and the curves in both classes are conics.
    \end{enumerate}

    \item $F$ is a minimal generator of $\ii_{D}$ and $|D-H|\neq\emptyset$:
    \begin{enumerate}
        \item[(a)] $[D]=kH+C$ for $k\in\mathds{Z}$ and $k\geq3$, or $[D]=(k+1)H-C$ for $k\in\mathds{Z}$ and $k\geq3$. The curves in both classes have degree $4k+2$, genus $2k^{2}+2k$, and minimal free resolution
        \[
        0\to \oo_{\PPP}(-(4+k))\oplus \oo_{\PPP}(-(3+k))
        \to \oo_{\PPP}(-(2+k))\oplus \oo_{\PPP}(-(1+k))\oplus \oo_{\PPP}(-4)
        \to \ii_{D}\to 0.
        \]
    \end{enumerate}

    \item $D$ is a curve contained in a quadric surface and $|D-H|\neq\emptyset$:
    \begin{itemize}
        \item[(a)] $[D]=2H$, and $D$ is the complete intersection of $X$ with a quadric; it is a curve of degree $8$ and genus $9$.
        \item[(b)] $[D]=H+C$ or $[D]=2H-C$, and $D$ is a curve of degree $6$ and genus $4$, residual to $C_{5}$ in the complete intersection of $X$ with a quadric surface $Q$ containing $C_{5}$.
    \end{itemize}

    \item $D$ is contained in a cubic surface and $|D-H|\neq\emptyset$:
    \begin{itemize}
        \item[(a)] $[D]=3H$, and $D$ is the complete intersection of $X$ with a cubic; it is a curve of degree $12$ and genus $19$.
        \item[(b)] $[D]=3H-C$ or $[D]=2H+C$, and $D$ is a curve of degree $10$ and genus $12$, residual to the conic $C_{5}$ in the complete intersection of $X$ with a cubic containing $C_{5}$.
    \end{itemize}

    \item $D$ is the complete intersection of $X$ with a surface of degree $d$, with $d\geq4$. The class of this curve is $dH$.
\end{enumerate}
\end{prop}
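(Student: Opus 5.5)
I will follow the same template as the proof of Proposition~\ref{prop1}, now using the intersection form $H^2=4$, $H\cdot C=2$, $C^2=-2$. For $[D]=aH+bC$ this gives
\[
D\cdot H=4a+2b,\qquad D^2=4a^2+4ab-2b^2 .
\]
The first step is to note the symmetry of $\mathrm{Pic}(X)$ given by $C\mapsto H-C$. It preserves the lattice, since $(H-C)^2=4-4-2=-2$ and $H\cdot(H-C)=2$. Geometrically, a plane through the conic $C_5$ cuts $X$ in $C_5$ plus a residual conic of class $H-C$. This explains why every case in the statement comes in pairs $\{kH+C,(k+1)H-C\}$, and it halves the verification.

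For item (i) I apply the criterion of \cite{kwa}*{Thm.1.1} exactly as before. I run through its four numerical cases: $D^2=-2$ with $1\le D\cdot H\le 3$; $D^2=0$ with $3\le D\cdot H\le4$; $D^2=2$ with $D\cdot H=5$; and $D^2=4$ with $D\cdot H=6$ and $|D-H|=\emptyset$. Since $D\cdot H=4a+2b$ is even, the odd-degree cases are excluded at once. The case $D^2=-2$, $D\cdot H=2$ gives $b=1-2a$ and $D^2=-12a^2+12a-2$, whose solutions are $a=0$ and $a=1$, i.e.\ $C$ and $H-C$. The case $D^2=0$, $D\cdot H=4$ gives $b=2-2a$ and $-12a^2+24a-8=0$, which has no integer solution. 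The case $D^2=4$, $D\cdot H=6$ gives $b=3-2a$ and $-12a^2+36a-18=4$, again with no integer solution. Both classes $C$ and $H-C$ have degree $2$ and genus $0$, so they are conics.

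For item (ii) I use that $X\in\mathcal{F}_5$ is determinantal, and identify its admissible pair as the self-dual pair $((1,2),(3,4))$ (matrix $M_3$). I then apply Theorem~\ref{teoprinc}(2)(ii) with shift $k$ to get the stated resolution. From the Betti numbers via \eqref{ecuagradoygenero} I compute $d_D=4k+2$ and $g_D=2k^2+2k$, so $D^2=2g_D-2=4k^2+4k-2$. Solving $4a+2b=4k+2$ together with $4a^2+4ab-2b^2=4k^2+4k-2$ gives exactly $(a,b)=(k,1)$ or $(k+1,-1)$, since the quadratic in $b$ factors with roots $b=\pm1$. Conversely, Proposition~\ref{propclases} shows every curve in these classes has the same Rao module as one realizing the resolution, namely $0$, so it is ACM.

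Items (iii)–(v) handle the small shifts $k=0,1,2$ and Theorem~\ref{teoprinc}(2)(iii). Shift $k=0$ gives the conics. For $k=1,2$, I link $C_5$ by $X\cap Q$ with $Q$ a quadric, respectively by $X\cap S$ with $S$ a cubic, using \cite{mdp2}*{III, Prop.~1.2}. For the quadric, $d=8-2=6$ and $g=\frac{(6-2)(2)}{2}+0=4$. For the cubic, $d=10$ and $g=\frac{8\cdot3}{2}+0=12$. By the symmetry, the classes $H+C$ and $2H-C$, and likewise $2H+C$ and $3H-C$, appear together. Item (v) then reduces to listing complete intersections. For $X\cap(\text{surface of degree }d)$, solving the system gives $a=d$, $b=0$ uniquely. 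For the remaining complete intersections of two surfaces of degrees $m,n\le3$, I argue as in Proposition~\ref{prop1}: compare degree and genus with the lattice, and exclude lines, twisted cubics and plane cubics because a very general $X$ has Picard rank two and contains none of them. For example, a line would need $4a+2b=1$, which is impossible by parity.

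The main obstacle is this exclusion step. I have to check every $(m,n)$ pair, and some of them, such as $(1,2)$ conics and $(2,2)$ quartic elliptic curves, do occur here or need a lattice argument rather than a genericity argument. The conic case $(1,2)$ is precisely $C$ and $H-C$. For the elliptic quartic I need to show that $D^2=0$, $D\cdot H=4$ has no solution, which the computation in item (i) already does.
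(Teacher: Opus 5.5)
Your proposal is correct and follows essentially the same route as the paper. It uses Watanabe's criterion for (i), Theorem~1 with the self-dual pair $((1,2),(3,4))$ and the lattice equations $4a+2b=4k+2$, $D^2=4k^2+4k-2$ for (ii), linkage of $C_5$ for (iii)--(iv), and lattice/parity checks for (v); your value $-12a^2+24a-8$ corrects a slip in the paper, and your $(2,2)$ check fills a case its proof of (v) omits. The only step to tighten is the ACM direction in (ii): rather than appealing through Proposition~\ref{propclases} to an unspecified curve ``realizing the resolution,'' note directly, as the paper does, that $H^1(\oo_X(nH-D))=H^1(\oo_X((n-k)H-C))=H^1(\ii_{C_5}(n-k))=0$, and likewise for $(k+1)H-C$ using the residual conic of class $H-C$.
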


\begin{proof}
For (i), we follow the conditions of \cite{kwa}*{Thm.1.1}.

\begin{itemize}
    \item $D^{2}=-2$ and $1\leq D\cdot H\leq3$.\\
    From the second condition we obtain $1\leq4a+2b\leq3$, hence
    $1-4a\leq2b\leq3-4a$. Since $b\in\mathbb{Z}$, we have $b=1-2a$.
    From the first equation,
    \[
    -2=D^{2}=4a^{2}+4ab-2b^{2}
    =4a^{2}+4a(1-2a)-2(1-2a)^{2},
    \]
    which gives $6a(1-a)=0$. Therefore, the only integer solutions are
    $(a,b)=(0,1)$, corresponding to $D=C$, and $(a,b)=(1,-1)$, corresponding to
    $D=H-C$.

    \item $D^{2}=0$ and $3\leq D\cdot H\leq4$.\\
    From $3\leq4a+2b\leq4$ we obtain $b=2-2a$. Substituting, we get
    \[
    0=D^{2}=-12a^{2}+24a-4,
    \]
    which has no integer solutions.

    \item $D^{2}=2$ and $D\cdot H=5$.\\
    Then $4a+2b=5$, which has no integer solutions.

    \item $D^{2}=4$, $D\cdot H=6$, and $|D-H|=|2H-D|=\emptyset$.\\
    From $4a+2b=6$ we have $b=3-2a$. Substituting,
    \[
    4=D^{2}=-12a^{2}+36a-18,
    \]
    which has no integer solutions.
\end{itemize}

For (ii), the surface $X$ is a determinantal quartic surface of type $((1,2),(3,4))$ and is autodual. By Theorem~\ref{teoprinc}, the resolution of $D$ is
\[
0\to \oo_{\PPP}(-(4+k))\oplus \oo_{\PPP}(-(3+k))
\to \oo_{\PPP}(-(2+k))\oplus \oo_{\PPP}(-(1+k))\oplus \oo_{\PPP}(-4)
\to \ii_{D}\to 0.
\]
Using the formulas \eqref{ecuagradoygenero}, we obtain $d_{D}=4k+2$ and $g_{D}=2k^{2}+2k$.
Since the cases where $D$ is contained in a surface of smaller degree are treated separately, we assume $k\geq3$.
Moreover,
\[
4k+2=D\cdot H=4a+2b,\qquad
4k+4k^{2}-2=D^{2}=4a^{2}+4ab-2b^{2},
\]
which implies $a=k$, $b=1$, and $D=kH+C$, or $a=k+1$, $b=-1$, and $D=(k+1)H-C$.

We now verify that the curves in both classes are ACM. Suppose $|D-H|\neq\emptyset$ and $D=(k+4)H+C$ with $k\geq-2$. Then
$nH-D=(n-(k+4))H-C$, and since $C$ is ACM,
\[
H^{1}(\oo_{X}(nH-D))=H^{1}(\oo_{X}((n-(k+4))H-C))=0
\]
for all $n$, hence $D$ is ACM.

Similarly, if $|D-H|\neq\emptyset$ and $D=(k+5)H-C$ with $k\geq-2$, then
$nH-D=(n-(k+5))H+C$, and since $-C$ is ACM,
\[
H^{1}(\oo_{X}(nH-D))=H^{1}(\oo_{X}((n-(k+5))H+C))=0
\]
for all $n$, and therefore $D$ is ACM.

We now verify that there are no plane curves different from conics or plane quartic curves (hyperplane sections). Let $D$ be a plane curve on $X$ with class $aH+bC$. Its degree is $4a+2b$, and its genus is
$\frac{(4a+2b-1)(4a+2b-2)}{2}$.
This equals $\frac{D^{2}-2}{2}$ if and only if $(a,b)=(0,1)$, $(1,0)$, or $(1,-1)$.
Thus $D$ has class $C$, $H$, or $H-C$, and the claim follows.

For items (iii) and (iv), we consider the remaining cases of (ii) and the cases arising from items (2)(ii) and (2)(iii) of Theorem~\ref{teoprinc}:
\begin{itemize}
    \item[\textbf{Case 1:}] By (ii) with $k=0$, $D$ is a conic.

    \item[\textbf{Case 2:}] By (ii) with $k=1$, $D$ is a curve of degree $6$ and genus $4$ contained in a quadric, with class $H+C$ or $2H-C$. Geometrically, let $Q$ be a quadric surface containing $C_{5}$ and having no common components with $X$. Then $X\cap Q=C_{5}\cup D$, and by \cite{mdp2}*{III, Prop.1.2}, we have $d_{D}=6$ and $g_{D}=4$.

    \item[\textbf{Case 3:}] By (ii) with $k=2$, $D$ is a curve of degree $10$ and genus $12$ contained in a cubic surface, with class $2H+C$ or $3H-C$. Let $S$ be a cubic surface containing $C_{5}$ and having no common components with $X$. Then $X\cap S=C_{5}\cup D$, and by \cite{mdp2}*{III, Prop.1.2}, we have $d_{D}=10$ and $g_{D}=12$.

    \item[\textbf{Case 4:}] By Theorem~\ref{teoprinc}(2)(iii) and the pair $((1,2),(3,4))$, we obtain the following resolutions:
    \begin{itemize}
        \item
        \[
        0\to \oo_{\PPP}(-3)\oplus \oo_{\PPP}
        \to \oo_{\PPP}(-1)\oplus \oo_{\PPP}(-2)
        \to \ii_{D}\to 0,
        \]
        which corresponds to a conic.
        \item
        \[
        0\to \oo_{\PPP}(-5)\oplus \oo_{\PPP}
        \to \oo_{\PPP}(-2)\oplus \oo_{\PPP}(-3)
        \to \ii_{D}\to 0,
        \]
        which coincides with Case~2.
    \end{itemize}

    In all cases, the curves are ACM. Indeed, if $D=mH+C$, then
    $nH-D=(n-m)H-C$, and since $C$ is ACM,
    $H^{1}(\oo_{X}(nH-D))=0$ for all $n$.
    Similarly, if $D=mH-C$, then
    $nH-D=(n-m)H+C$, and since $-C$ is ACM,
    $H^{1}(\oo_{X}(nH-D))=0$ for all $n$.
\end{itemize}

Finally, we prove (v). As in the previous proposition, the only possibility is that the curve is the complete intersection of $X$ with another surface of degree $d$, in which case $a=d$ and $b=0$. For the remaining options, if the curve is the complete intersection of surfaces of degrees $m$ and $n$, we have:
for $(m,n)=(1,2)$ or $(2,1)$, the curve is a conic with class $C$;
for $(m,n)=(1,1)$, $(1,3)$, $(3,1)$, or $(3,3)$, these cases are not possible, since they would give a line or a plane cubic, whose residual curve under the complete intersection would be a line contained in $X$, which is impossible.
The remaining cases $(m,n)=(2,3)$ or $(3,2)$ give a curve of degree $6$ and genus $4$, which is covered in item (iii).\\
\end{proof}

\subsubsection{\textbf{Quartic surfaces containing an ACM curve of degree $4$ and genus $1$}}

Let $X\in\mathcal{F}_{3}$ be a very general smooth element. We denote the class $[C_{3}]$ by $C$. Then $Pic(X)$ is generated by $H$ and $C$, with $H^{2}=4$, $H\cdot C=4$, and $C^{2}=0$.

\begin{prop}
Let $X=V(F)\in\mathcal{F}_{3}$ be a very general smooth quartic surface in $\PPP$, and let $D$ be a curve on $X$ such that $D\notin |H|$. Let $aH+bC$ be the class of $D$ in $Pic(X)$. Then $D$ is an ACM curve if and only if one of the following cases occurs:
\begin{enumerate}[label=(\roman*)]
    \item $|D-H|=\emptyset$:
    \begin{enumerate}
        \item[(a)] $[D]=C$ or $[D]=2H-C$, and in both cases the curves in these classes are ACM curves of degree $4$ and genus $1$.
    \end{enumerate}

    \item $F$ is a minimal generator of $\ii_{D}$ and $|D-H|\neq\emptyset$:
    \begin{enumerate}
        \item[(a)] $[D]=(k+1)H-C$ for $k\in\mathds{Z}$ with $k\geq3$, or $[D]=(k-1)H+C$ for $k\in\mathds{Z}$ with $k\geq3$.  
        In both cases, the curves have degree $4k$, genus $2k^{2}-1$, and minimal free resolution
        \[
        0\to \oo_{\PPP}(-(3+k))^{2}
        \to \oo_{\PPP}(-(1+k))^{2}\oplus \oo_{\PPP}(-4)
        \to \ii_{D}\to 0.
        \]
    \end{enumerate}

    \item $D$ is a curve contained in a quadric surface and $|D-H|\neq\emptyset$:
    \begin{itemize}
        \item[(a)] $[D]=2H$, and $D$ is the complete intersection of $X$ with a quadric surface; it is a curve of degree $8$ and genus $9$.
    \end{itemize}

    \item $D$ is a curve contained in a cubic surface and $|D-H|\neq\emptyset$:
    \begin{itemize}
        \item[(a)] $[D]=3H$, and $D$ is the complete intersection of $X$ with a cubic surface; in this case $D$ is a curve of degree $12$ and genus $19$.
        \item[(b)] $[D]=H+C$ or $[D]=3H-C$, and in both cases $D$ is a curve of degree $8$ and genus $7$, residual to the curve $C_{3}$ in the complete intersection of $X$ with a cubic surface $S$ containing $C_{3}$.
    \end{itemize}

    \item $D$ is the complete intersection of $X$ with a surface of degree $d$, with $d\geq4$. The class of this curve is $dH$.
\end{enumerate}
\end{prop}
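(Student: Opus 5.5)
The proof follows the two previous propositions for $\mathcal{F}_4$ and $\mathcal{F}_5$, now with $H^2=4$, $H\cdot C=4$, $C^2=0$. For $D=aH+bC$ we have
\[
D\cdot H=4a+4b,\qquad D^2=4a^2+8ab .
\]

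\textbf{Item (i).} We apply the criterion of \cite{kwa}*{Thm.1.1} case by case.
\begin{itemize}
\item[$\bullet$] $D^2=-2$ with $1\le D\cdot H\le 3$ is impossible, since $4\mid D\cdot H$.
\item[$\bullet$] $D^2=0$ with $3\le D\cdot H\le 4$ forces $a+b=1$. Then $0=4a^2+8a(1-a)=4a(2-a)$, so $(a,b)=(0,1)$ or $(2,-1)$. These are the classes $C$ and $2H-C$, both of degree $4$ and genus $1$.
\item[$\bullet$] $D^2=2$ with $D\cdot H=5$, and $D^2=4$ with $D\cdot H=6$, are impossible by divisibility by $4$.
\end{itemize}
The class $2H-C$ is the residual of $C_3$ in $X\cap Q$ for a quadric $Q\supset C_3$. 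Since $C_3$ is itself a $(2,2)$ complete intersection, the residual is again a complete intersection of two quadrics, hence ACM.

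\textbf{Item (ii).} The surface $X$ is determinantal of type $((1,1),(3,3))$, which is self-dual. Theorem~\ref{teoprinc}(2)(ii) gives the resolution
\[
0\to\oo_{\PPP}(-(3+k))^2\to\oo_{\PPP}(-(1+k))^2\oplus\oo_{\PPP}(-4)\to\ii_D\to 0 .
\]
Formula \eqref{ecuagradoygenero} then yields $d_D=4k$ and $g_D=2k^2-1$. Solving
\[
4a+4b=4k,\qquad 4a^2+8ab=4k^2-4
\]
gives $b=k-a$, hence $a^2-2ak+k^2=1$, so $a=k\pm1$. Thus $D=(k+1)H-C$ or $D=(k-1)H+C$. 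The restriction $k\ge3$ separates off the cases where $D$ lies on a surface of degree at most $3$.

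For the converse, I use the argument from the conic case: $nH-D$ is a twist of $\pm C$, and both $C$ and $-C=2H-C-2H$ are ACM. Hence $H^1(\oo_X(nH-D))=0$ for all $n$, and Proposition~\ref{propclases} transfers the ACM property to every curve in the class.

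\textbf{Items (iii) and (iv).} These come from the small values $k=0,1,2$ in the resolution above, together with Theorem~\ref{teoprinc}(2)(iii) applied to $((1,1),(3,3))$.
\begin{itemize}
\item[$\bullet$] $k=0$: $d=0$, which is degenerate and is discarded.
\item[$\bullet$] $k=1$: $D=2H-C$ or $C$, which is item (i).
\item[$\bullet$] $k=2$: $d=8$, $g=7$, classes $3H-C$ and $H+C$. Geometrically, take a cubic $S\supset C_3$ with no common component with $X$. By \cite{mdp2}*{III, Prop.1.2} the residual has degree $12-4=8$ and genus $\frac{(8-4)(3)}{2}+1=7$.
\item[$\bullet$] Theorem~\ref{teoprinc}(2)(iii): this gives a $(2,2)$ complete intersection, that is, $C_3$ again.
\end{itemize}
It remains to show that $2H$ is the only ACM class of degree $8$ lying on a quadric, and that no other classes appear. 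The point is that a degree-$8$ class $aH+bC$ with $a+b=2$ has genus determined by $a$, so I would check which of these lie on a quadric.

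\textbf{Item (v).} As in the earlier propositions, a complete intersection $X\cap S_d$ has $a=d$ and $b=0$. Complete intersections of types $(m,n)$ with $m,n\le3$ must be checked against the lattice.
\begin{itemize}
\item[$\bullet$] Lines, conics and plane cubics would have degree not divisible by $4$, so they are excluded.
\item[$\bullet$] Type $(2,2)$ gives $C_3$ or $2H-C$.
\item[$\bullet$] Types $(2,3)$ and $(3,3)$ would have degrees $6$ and $9$, again not divisible by $4$, so they are excluded.
\end{itemize}

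The main obstacle is the bookkeeping in items (iii) and (iv): ensuring that the list of small cases is complete. In particular, I must rule out a class such as $H+C$ lying on a quadric. I would do this via the linkage genus formula: a degree-$8$ curve on $Q$ linked inside $X\cap Q$ would be residual to a degree-$0$ curve, so it must be the complete intersection $2H$.
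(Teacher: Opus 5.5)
Your proposal is correct and follows the paper's proof essentially step by step: the initialized-ACM-line-bundle criterion for item (i), Theorem~1 with the self-dual pair $((1,1),(3,3))$ and the degree/genus formulas for item (ii), the small values $k=0,1,2$ together with case (2)(iii) for items (iii)--(iv), and the complete-intersection analysis for item (v). Your small variations are all valid and somewhat tidier than the paper's residual arguments: you exclude lines, conics, plane cubics and the $(2,3)$, $(3,3)$ complete intersections at once via $D\cdot H=4(a+b)$; you check that the residual of $C_3$ in a quadric is again a $(2,2)$ complete intersection; and you observe that a degree-$8$ curve inside $X\cap Q$ must equal $X\cap Q$.
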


\begin{proof}
For (i), as before, we follow the conditions of \cite{kwa}*{Thm.1.1}.

\begin{itemize}
    \item $D^{2}=-2$ and $1\leq D\cdot H\leq3$.\\
    From the second condition we obtain $1\leq4a+4b\leq3$, hence
    $1-4a\leq4b\leq3-4a$. Since $b\in\mathbb{Z}$, this inequality has no solution.

    \item $D^{2}=0$ and $3\leq D\cdot H\leq4$.\\
    From $3\leq4a+4b\leq4$ we obtain $b=1-a$.
    Substituting into the first equation,
    \[
    0=D^{2}=4a^{2}+8ab=4a^{2}+8a(1-a)=4a(2-a),
    \]
    which has two solutions: $a=0$, $b=1$, corresponding to $D=C$, and
    $a=2$, $b=-1$, corresponding to $D=2H-C$.

    \item $D^{2}=2$ and $D\cdot H=5$.\\
    Then $D\cdot H=4a+4b=2(2a+2b)=5$, which has no integer solution.

    \item $D^{2}=4$, $D\cdot H=6$, and $|D-H|=|2H-D|=\emptyset$.\\
    From $4a+4b=6$ we obtain $2(a+b)=3$, which has no integer solution.
\end{itemize}

For (ii), the surface $X$ is a determinantal quartic surface of type $((1,1),(3,3))$ and is autodual. Therefore, by Theorem~\ref{teoprinc}, the resolution of $D$ is
\[
0\to \oo_{\PPP}(-(3+k))^{2}
\to \oo_{\PPP}(-(1+k))^{2}\oplus \oo_{\PPP}(-4)
\to \ii_{D}\to 0.
\]
Using the formulas \eqref{ecuagradoygenero}, we compute
$d_{D}=4k$ and $g_{D}=2k^{2}-1$.
Since the cases where $D$ is contained in a surface of lower degree are treated separately, we assume $k\geq3$.
Moreover,
\[
4k=D\cdot H=4a+4b,\qquad
4k^{2}-4=D^{2}=4a^{2}+8ab,
\]
from which it follows that $a=k+1$, $b=-1$, and $D=(k+1)H-C$, or
$a=k-1$, $b=1$, and $D=(k-1)H+C$.

We now verify that the curves in both classes are ACM. Suppose $|D-H|\neq\emptyset$ and $D=(k+1)H-C$.
Then $nH-D=(n-(k+1))H-C$, and since $C$ is ACM,
\[
H^{1}(\oo_{X}(nH-D))=H^{1}(\oo_{X}((n-(k+1))H-C))=0
\]
for all $n$, hence $D$ is ACM.

Now let $D=(k-1)H+C$ with $|D-H|\neq\emptyset$.
Then $nH-D=(n-(k+1))H+C$, and since $-C$ is ACM, the same argument shows that $D$ is ACM.

We now verify that there are no plane curves other than plane quartic curves (hyperplane sections).
Let $D$ be a plane curve on $X$ with class $aH+bC$.
Then the degree of $D$ is $4a+4b$, and its genus is
\[
\frac{(4a+4b-1)(4a+4b-2)}{2}.
\]
This equals $\frac{D^{2}-2}{2}$ if and only if $a=1$ and $b=0$, hence $D\in|H|$, and the claim follows.

For (iii) and (iv), we consider the remaining cases of (ii) and the cases given by items (2)(ii) and (2)(iii) of Theorem~\ref{teoprinc}:
\begin{itemize}
    \item[\textbf{Case 1:}] Item (ii) with $k=0$. This is not possible since the degree is $0$.

    \item[\textbf{Case 2:}] Item (ii) with $k=1$. In this case, the curve is $C_{3}$.

    \item[\textbf{Case 3:}] Item (ii) with $k=2$. Then $D$ is a curve of degree $8$ and genus $7$ contained in a cubic surface, with class $H+C$ or $3H-C$.  
    Geometrically, let $S$ be a cubic surface containing $C_{3}$ and having no common components with $X$. Then
    $X\cap S=C_{3}\cup D$, and by \cite{mdp2}*{III,Prop.1.2}, we obtain
    $d_{D}=8$ and $g_{D}=7$.

    \item[\textbf{Case 4:}] By Theorem~\ref{teoprinc}(2)(iii) and the pair $((1,1),(3,3))$, we obtain the resolution
    \[
    0\to \oo_{\PPP}(-4)\oplus \oo_{\PPP}
    \to \oo_{\PPP}(-2)\oplus \oo_{\PPP}(-2)
    \to \ii_{D}\to 0.
    \]
    As before, in all cases the curves in both classes are ACM. Indeed, if $D=mH+C$, then
    $nH-D=(n-m)H-C$, and since $C$ is ACM,
    $H^{1}(\oo_{X}(nH-D))=0$ for all $n$.
    Similarly, if $D=mH-C$, then
    $nH-D=(n-m)H+C$, and since $-C$ is ACM,
    $H^{1}(\oo_{X}(nH-D))=0$ for all $n$.
\end{itemize}

Finally, we prove (v). As in Proposition~\ref{prop1}, if the curve is the complete intersection of $X$ with a surface of degree $d$, then its class is $dH$.
Now suppose that the curve is the complete intersection of surfaces of degrees $m$ and $n$, with $m\leq n$.
If $m=n=2$, then the curve is the complete intersection of two quadric surfaces, hence it is $C_{3}$.
The case $m=n=1$ gives a line; $(m,n)=(1,2)$ gives a conic; $(m,n)=(2,3)$ gives a curve residual to a conic;
$(m,n)=(1,3)$ gives a curve residual to a line; and $m=n=3$ gives a curve residual to a plane cubic.
All these cases are not possible.\\
\end{proof}

\subsubsection{\textbf{Quartic surfaces containing a twisted cubic}}
Let $X \in \mathcal{F}_{2}$ be a very general smooth element. We denote by $C$ the class $[C_{2}]$. Then $Pic(X)$ is generated by $H$ and $C$, with intersection numbers
\[
H^{2} = 4, \quad H \cdot C = 3, \quad C^{2} = -2.
\]

\begin{prop}
Let $X = V(F) \in \mathcal{F}_{2}$ be a very general smooth quartic surface in $\PPP$, and let $D \subset X$ be a curve such that $D \notin |H|$. Let $aH + bC$ be the class of $D$ in $Pic(X)$. Then $D$ is an ACM curve if and only if one of the following cases occurs:
\begin{enumerate}[label=(\roman*)]
    \item $D$ is a complete intersection.

    \item $|D-H| = \emptyset$:
    \begin{enumerate}
        \item[(a)] $[D] = C$, and the curves in this class are twisted cubics.
        \item[(b)] $[D] = 2H - C$, and $D$ has degree $5$ and genus $2$, with minimal free resolution
        \[
        0 \to \mathcal{O}_{\PPP}(-4)^{2} \to \mathcal{O}_{\PPP}(-2) \oplus \mathcal{O}_{\PPP}(-3)^{2} \to \mathscr{I}_{D} \to 0.
        \]
    \end{enumerate}

    \item $F$ is a minimal generator of $\mathscr{I}_{D}$ and $|D-H| \neq \emptyset$:
    \begin{enumerate}
        \item[(a)] $[D] = (k-1)H + C$, for $k \in \mathds{Z}$ and $k \geq 3$. Curves in this class have degree $4k-1$, genus $2k^{2}-k-1$, and minimal free resolution
        \[
        0 \to \oo_{\PPP}(-(k+2))^{2} \oplus \oo_{\PPP}(-(k+3)) \to \oo_{\PPP}(-(k+1))^{3} \oplus \oo_{\PPP}(-4) \to \ii_{D} \to 0.
        \]

        \item[(b)] $[D] = (k+1)H - C$, for $k \in \mathds{Z}$ and $k \geq 3$. Curves in this class have degree $4k+1$, genus $2k^{2}+k-1$, and minimal free resolution
        \[
        0 \to \oo_{\PPP}(-(k+3))^{3} \to \oo_{\PPP}(-(k+1)) \oplus \oo_{\PPP}(-(k+2))^{2} \oplus \oo_{\PPP}(-4) \to \ii_{D} \to 0.
        \]
    \end{enumerate}

    \item $D$ is contained in a quadric surface and $|D-H| \neq \emptyset$:
    \begin{itemize}
        \item[(a)] $[D] = 2H$, and $D$ is the complete intersection of $X$ with a quadric surface, hence a curve of degree $8$ and genus $9$.
        \item[(b)] $[D] = 2H - C$, and $D$ is a curve of degree $5$ and genus $2$, residual to the twisted cubic $C_{2}$ in the complete intersection of $X$ with a quadric surface $Q$ containing $C_{2}$.
    \end{itemize}

    \item $D$ is contained in a cubic surface and $|D-H| \neq \emptyset$:
    \begin{itemize}
        \item[(a)] $[D] = 3H$, and $D$ is the complete intersection of $X$ with a cubic surface, hence a curve of degree $12$ and genus $19$.
        \item[(b)] $[D] = H + C$, and $D$ is a curve of degree $7$ and genus $5$, residual to an ACM curve $A \subset X$ of degree $5$ and genus $2$ (see item (iv)) in the complete intersection of $X$ with a cubic surface $S$ containing $A$.
        \item[(c)] $[D] = 3H - C$, and $D$ is a curve of degree $9$ and genus $9$, residual to the twisted cubic $C_{2}$ in the complete intersection of $X$ with a cubic surface $S$ containing $C_{2}$.
    \end{itemize}

    \item $D$ is the complete intersection of $X$ with a surface of degree $d \geq 4$. In this case, the class of $D$ is $dH$.
\end{enumerate}
\end{prop}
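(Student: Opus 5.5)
I will follow the template of Proposition~\ref{prop1} and the two propositions after it, now with $H^2=4$, $H\cdot C=3$, $C^2=-2$. For $D=aH+bC$ this gives
\[
D\cdot H=4a+3b,\qquad D^2=4a^2+6ab-2b^2 .
\]
The proof splits into the case $|D-H|=\emptyset$, handled numerically through \cite{kwa}*{Thm.1.1}, and the case $|D-H|\neq\emptyset$, handled through Theorem~\ref{teoprinc}.

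For item (ii) I run through the four numerical conditions of \cite{kwa}*{Thm.1.1} and solve the quadratic Diophantine equation in each.
\begin{itemize}
\item[(1)] $D^2=-2$, $1\le D\cdot H\le 3$. Writing $4a+3b=c$ with $c\in\{1,2,3\}$ and substituting, I expect the only solution to be $(a,b)=(0,1)$, that is $D=C$, the twisted cubic.
\item[(2)] $D^2=0$, $D\cdot H\in\{3,4\}$. Here I expect no solutions; an elliptic cubic or quartic would force $X$ into another divisor $\mathcal F_i$, contradicting very generality.
\item[(3)] $D^2=2$, $D\cdot H=5$. This should give $(a,b)=(2,-1)$, i.e. $D=2H-C$.
\item[(4)] $D^2=4$, $D\cdot H=6$, together with the extra vanishing condition. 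I expect this to be excluded.
\end{itemize}
The resolution of $2H-C$ then comes from linkage. Such a curve is residual to $C_2$ in $X\cap Q$ for a quadric $Q\supset C_2$, so the mapping cone from \cite{mdp2}*{III, Prop.1.4} gives the stated resolution, and \cite{mdp2}*{III, Prop.1.2} gives degree $5$ and genus $2$.

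For item (iii), I first record that $X\in\mathcal F_2$ is determinantal of type $((1,1,1),(2,2,3))$, with dual $((1,2,2),(3,3,3))$. Theorem~\ref{teoprinc}(2)(ii) then yields exactly the two families of resolutions in the statement. I compute $d_D$ and $g_D$ from \eqref{ecuagradoygenero}. For $k\ge3$ I solve $4a+3b=d_D$ and $4a^2+6ab-2b^2=2g_D-2$; the solutions should be $(k-1)H+C$ and $(k+1)H-C$, respectively. Conversely, these classes are ACM: $nH-D$ equals $mH\mp C$, and $\pm C$ is ACM (the class $-C$ is handled through $2H-C$, or Serre duality on the K3 surface). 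So Proposition~\ref{propclases}, or the $H^1$ computation used in the previous propositions, applies.

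Items (iv)–(vi) come from the small values $k=0,1,2$ in (iii), together with Theorem~\ref{teoprinc}(2)(iii). Each such class gets a geometric realization by linkage, using \cite{mdp2}*{III, Prop.1.2} for degree and genus:
\begin{itemize}
\item $3H-C$ is residual to $C_2$ in $X\cap S$ with $S$ a cubic, giving degree $9$ and genus $9$;
\item $H+C$ is residual to a curve $A\in|2H-C|$ in $X\cap S$ with $S$ a cubic, giving degree $7$ and genus $5$.
\end{itemize}
Finally, I exclude any other complete intersections of surfaces of degrees $m,n\le3$, as before. Lines, conics and plane cubics do not occur on $X$, since $D\cdot H\in\{1,2\}$ has no solutions with the right $D^2$; the only possibility left is a $(2,3)$ curve of degree $6$ and genus $4$, which I check separately.

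The main obstacle is the bookkeeping of the small-$k$ cases, where the two resolution families overlap. For instance, $k=1$ in family (b) gives $2H-C$, but that curve does not contain $F$ as a minimal generator. The same issue arises in deciding exactly which of these curves are truly ACM, rather than merely numerically compatible. I would settle it by comparing $h^0(\ii_D(n))$ for $n\le3$ with the linkage description, one class at a time.
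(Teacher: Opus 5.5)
Your proposal is correct and takes essentially the same route as the paper. You use Watanabe's numerical criterion for the case $|D-H|=\emptyset$, Theorem~1 with the pair $((1,1,1),(2,2,3))$ and its dual for $k\geq 3$, and the small values of $k$ together with case (2)(iii), each realized by linkage; the differences are minor. You get the resolution of $2H-C$ from the mapping cone instead of from Theorem~1(2)(iii), and you check ACM-ness via the vanishing $H^1(\mathcal{O}_X(mH\pm C))=0$ (Serre duality on the K3) instead of the paper's appeal to its proposition on Picard classes. That vanishing already answers the ACM question in your last paragraph, so the class-by-class $h^0$ comparison you propose there is unnecessary.
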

\begin{proof}
For (i), as before, we follow the conditions of Theorem 1.1 of \cite{kwa}.
\begin{itemize}
        \item $D^{2}=-2$ and $1\leq D.H\leq 3$.\\
    From the second condition, we have $1\leq D.H=4a+3b\leq 3$, hence
    $\frac{1}{3}-\frac{4a}{3}\leq b\leq 1-\frac{4a}{3}$. We consider three cases:
    \begin{itemize}
        \item[$a=3m$]  
        $$\frac{1}{3}-\frac{4a}{3}=\frac{1-12m}{3}\leq b \leq 1-\frac{4a}{3}=1-4m.$$
        Thus, the only possible choice is $a=3m$ and $b=1-4m$. From the first equation we obtain
        $-2=D^{2}=4a^{2}+6ab-2b^{2}$, hence
        $$-1=2a^{2}+3ab-b^{2}=2(3m)^{2}+3(3m)(1-4m)-(1-4m)^{2}=18m^{2}+9m-36m^{2}-1+8m-16m^{2}$$
        and therefore
        $$m(17-34m)=0.$$
        Since the equation $17-34m=0$ has no integer solutions, the only solution is $m=0$, hence $a=0$, $b=1$, and $D=C$.

        \item[$a=3m+1$] 
        $$\frac{1}{3}-\frac{4a}{3}=\frac{-12m-3}{3}=-4m-1\leq b \leq 1-\frac{4a}{3}=\frac{12m-1}{3}.$$
        Thus, the only possible choice is $a=3m+1$ and $b=-1-4m$. From the first equation we obtain
\begin{align*}
    -1&=2a^{2}+3ab-b^{2}=2(3m+1)^{2}+3(3m+1)(-1-4m)-(-1-4m)^{2}\\
    &=18m^{2}+12m+2-36m^{2}-9m-12m-3-16m^{2}-12m-1,
\end{align*}
and therefore
$$m(-21-34m)=1,$$
which has no integer solutions.

        \item[$a=3m+2$] 
        $$\frac{1}{3}-\frac{4a}{3}=\frac{-7-12m}{3}\leq b \leq 1-\frac{4a}{3}=\frac{-12m-5}{3}.$$
        Hence, the only possible choice is $b=\frac{-12m-6}{3}=-4m-2$. From the first equation we obtain
\begin{align*}
    -1&=2a^{2}+3ab-b^{2}=2(3m+2)^{2}+3(3m+2)(-2-4m)-(-2-4m)^{2}\\
    &=18m^{2}+24m+8-36m^{2}-18m-24m-12-16m^{2}-16m-4,
\end{align*}
and therefore
$$34m(-1-m)=7,$$
which has no integer solutions.
\end{itemize}

        \item $D^{2}=0$ and $3\leq D.H \leq 4$. \\
        From the second condition, we have $3\leq D.H=4a+3b\leq 4$, hence
        $1-\frac{4a}{3}\leq b\leq \frac{4}{3}-\frac{4a}{3}$. Again, we consider three cases:
\begin{itemize}
        \item[$a=3m$]  
        $$1-\frac{4a}{3}=1-4m\leq b \leq\frac{4}{3}-\frac{4a}{3}=\frac{-12m-4}{3}.$$
        Thus, the only possible choice is $a=3m$ and $b=1-4m$. From the first equation we obtain
        $0=D^{2}=4a^{2}+6ab-2b^{2}$, hence
        $$0=2a^{2}+3ab-b^{2}=2(3m)^{2}+3(3m)(1-4m)-(1-4m)^{2}=18m^{2}+9m-36m^{2}-1+8m-16m^{2}$$
        and therefore
        $$m(17-34m)=1,$$
        which has no integer solutions.

        \item[$a=3m+1$] 
        $$1-\frac{4a}{3}=\frac{-12m-1}{3}\leq b \leq \frac{4}{3}-\frac{4a}{3}=\frac{-12m}{3}=-4m.$$
        Hence, the only possible choice is $a=3m+1$ and $b=-4m$. From the first equation we obtain
\begin{align*}
    0&=2a^{2}+3ab-b^{2}=2(3m+1)^{2}+3(3m+1)(-4m)-(-4m)^{2}\\
    &=18m^{2}+12m+2-36m^{2}-12m-16m^{2},
\end{align*}
and therefore
$$-34m^{2}=-2,$$
which has no integer solutions.

        \item[$a=3m+2$] 
        $$1-\frac{4a}{3}=\frac{-12m-5}{3}\leq b \leq \frac{4}{3}-\frac{4a}{3}=\frac{-12m-4}{3}.$$
        This inequality has no integer solutions.
\end{itemize}

        \item $D^{2}=2$ and $D.H=5$.\\
        From the second equation, we obtain $a=3m+2$ and $b=-1-4m$. Then
\begin{align*}
    1&=2a^{2}+3ab-b^{2}=2(3m+2)^{2}+3(3m+2)(-1-4m)-(-1-4m)^{2}\\
    &=18m^{2}+12m+8-9m-36m^{2}-6-24m-1-8m-16m^{2}\\
    &=-34m^{2}-21m+1.
\end{align*}
        Hence, the only solution is $m=0$, and therefore $a=2$, $b=-1$, and $D=2H-C$, which is a curve of degree $5$ and genus $2$.

        \item $D^{2}=4$, $D.H=6$ and $|D-H|=|2H-D|=\emptyset$.\\
        From the second equation, we have $D.H=4a+3b=6$, which has no integer solutions.
\end{itemize}

For (ii), we have that $X$ is determinantal of type $((1,1,1),(2,2,3))$ and its dual pair is $((1,2,2),(3,3,3))$. Therefore, by Theorem \ref{teoprinc}, we have two possible resolutions for $D$:
\begin{itemize}
    \item[Case 1:]  
    $$0\to \oo_{\PPP}(-(3+k))\oplus \oo_{\PPP}(-(2+k))^{2}\to \oo_{\PPP}(-(1+k))^{3}\oplus \oo_{\PPP}(-4)\to \ii_{D}\to 0.$$
    As before, using formulas \eqref{ecuagradoygenero}, we compute the degree $d_{D}=4k-1$ and the genus $g_{D}=2k^{2}-k-1$.\\
    Since we will treat the cases in which $D$ is contained in a surface of lower degree in another item, we assume $k\geq 3$. On the other hand,
$$4k-1=d_{D}=D.H=4a+3b \quad \text{and} \quad -2k+4k^{2}-2=2g_{D}-2=D^{2}=4a^{2}+6ab-2b^{2}.$$
    It follows that $a=k-1$ and $b=1$ is the only solution. By Proposition \ref{propclases}, curves with this class are ACM curves.

 \item[Case 2:]  
 $$0\to \oo_{\PPP}(-(3+k))^{3}\to \oo_{\PPP}(-(1+k))\oplus \oo_{\PPP}(-(2+k))^{2}\oplus \oo_{\PPP}(-4)\to \ii_{D}\to 0.$$
    Using the Betti numbers and formulas \eqref{ecuagradoygenero}, we compute the degree $d_{D}=4k+1$ and the genus $g_{D}=2k^{2}+k-1$.\\
    Again, we assume $k\geq 3$. On the other hand,
$$4k+1=d_{D}=D.H=4a+3b \quad \text{and} \quad 2k+4k^{2}-4=2g_{D}-2=D^{2}=4a^{2}+6ab-2b^{2}.$$
    It follows that $a=k+1$ and $b=-1$ is the only solution. By Proposition \ref{propclases}, curves with this class are ACM curves.
\end{itemize}

We must verify that there are no plane curves other than plane quartic curves (hyperplane sections). Let $D$ be a plane curve in $X$ with class $aH+bL$. Then the degree of $D$ is $4a+3b$ and the genus is $\frac{(4a+3b-1)(4a+3b-2)}{2}$, and this is equal to $\frac{D^{2}-2}{2}$ if and only if $a=1$ and $b=0$. Therefore, $D\in |H|$, and these are all the possibilities.

For items (iii) and (iv), we treat the remaining cases of (iii) and the cases given by item (2), (iii) of Theorem \ref{teoprinc}:
\begin{itemize}
    \item[Case 1:] By item (ii), case 1 with $k=0$, the intersection consists of three planes, and therefore it is not a curve.
    
    \item[Case 2:] By item (ii), case 1 with $k=1$, $D$ is a singular curve of degree $3$ and genus $0$, namely the twisted cubic $C_{2}$.
    
    \item[Case 3:] By item (ii), case 1 with $k=2$, $D$ is a curve of degree $7$ and genus $5$ contained in a cubic surface, with class $H+C$. A geometric description is as follows. Let $A$ be a curve of degree $5$ and genus $2$ contained in $X$ (we will prove its existence in Case 5), and let $S$ be a cubic surface that contains $A$ and has no common components with $X$. Then the complete intersection of $X$ with $S$ is the union of $A$ and a curve $D$. By \cite{mdp2}*{III, Prop. 1.2}, the degree and genus of $D$ are $d_{D}=3(4)-5=7$ and $g_{D}=\frac{(7-5)(3+4-4)}{2}+2=5$, respectively.
    
    \item[Case 4:] By item (ii), case 2, with $k=0$, this case is not possible.
    
    \item[Case 5:] By item (ii), case 2, with $k=1$, $D$ is a curve of degree $5$ and genus $2$ contained in a quadric surface, with class $2H-C$. A geometric description is as follows. Let $Q$ be a quadric surface that contains the twisted cubic $C_{2}$ and has no common components with $X$. Then the complete intersection of $X$ with $Q$ is the union of $C_{2}$ and a curve $D$. By \cite{mdp2}*{III, Prop. 1.2}, the degree and genus of $D$ are $d_{D}=2(4)-3=5$ and $g_{D}=\frac{(5-3)(2+4-4)}{2}+0=2$, respectively.
    
    \item[Case 6:] By item (ii), case 2, with $k=2$, $D$ is a curve of degree $9$ and genus $9$ contained in a cubic surface, with class $3H-C$. A geometric description is as follows. Let $S$ be a cubic surface that contains $C_{2}$ and has no common components with $X$. Then the complete intersection of $X$ with $S$ is the union of $C_{2}$ and a curve $D$. By \cite{mdp2}*{III, Prop. 1.2}, the degree and genus of $D$ are $d_{D}=3(4)-3=9$ and $g_{D}=\frac{(9-3)(3+4-4)}{2}+0=9$, respectively.

    \item[Case 7:] By Theorem \ref{teoprinc}, item (2) iii), and the pair $((1,1,1),(2,2,3))$, we have the following resolutions:
    \begin{itemize}
        \item $$0\to \oo_{\PPP}(-3)^{2}\to \oo_{\PPP}(-2)^{3} \to \ii_{D}\to 0.$$
        In this case, $D$ is the twisted cubic $C_{2}$.
        \item $$0\to \oo_{\PPP}(-5)\oplus \oo_{\PPP}(-4)\to \oo_{\PPP}(-3)^{3} \to \ii_{D}\to 0.$$
        This case coincides with Case 3.
    \end{itemize} 

    \item[Case 8:] By Theorem \ref{teoprinc}, item (2) iii), and the pair $((1,2,2),(3,3,3))$, we have the following resolution:
    $$0\to \oo_{\PPP}(-4)^{2}\to \oo_{\PPP}(-3)^{2}\oplus \oo_{\PPP}(-2) \to \ii_{D}\to 0,$$
    and this case coincides with Case 5.
\end{itemize}

For the last item, as we saw in the proof of the previous propositions, curves that are complete intersections of surfaces of degrees $m$ and $n$ with $m,n\leq 3$ do not occur. Therefore, the only possible curves are complete intersections of $X$ with another surface of degree $d$, in which case the class is $dH$.\\
\end{proof}

\subsubsection{\textbf{Quartic surfaces containing an ACM curve of degree $6$ and genus $3$}}
Let $X \in \mathcal{F}_{1}$ be a very general smooth element. We denote by $C$ the class $[C_{1}]$. Then $Pic(X)$ is generated by $H$ and $C$, with intersection numbers
\[
H^{2} = 4, \quad H \cdot C = 6, \quad C^{2} = 4.
\]

\begin{prop}
Let $X = V(F) \in \mathcal{F}_{1}$ be a very general smooth quartic surface in $\PPP$, and let $D \subset X$ be a curve such that $D \notin |H|$. Let $aH + bC$ be the class of $D$ in $Pic(X)$. Then $D$ is an ACM curve if and only if one of the following cases occurs:
\begin{enumerate}[label=(\roman*)]
    \item $|D-H| = \emptyset$:
    \begin{enumerate}
        \item[(a)] $[D] = C$ or $[D] = 3H - C$, and in both cases the curves in these classes are equivalent to the curve $C_{1}$.
    \end{enumerate}

    \item $F$ is a minimal generator of $\ii_{D}$ and $|D-H| \neq \emptyset$:
    \begin{enumerate}
        \item $[D] = (k-2)H + C$ for $k \in \mathds{Z}$ with $k \geq 3$, or $[D] = (k+1)H - C$ for $k \in \mathds{Z}$ with $k \geq -1$. In both cases, the curves have degree $4k-2$, genus $2k^{2}-2k-1$, and minimal free resolution
        \[
        0 \to \oo_{\PPP}(-(k+2))^{4} \to \oo_{\PPP}(-(k+1))^{4} \oplus \oo_{\PPP}(-4) \to \ii_{D} \to 0.
        \]
    \end{enumerate}

    \item $D$ is a curve contained in a quadric surface and $|D-H| \neq \emptyset$:
    \begin{itemize}
        \item[(a)] $[D] = 2H$, and $D$ is the complete intersection of $X$ with a quadric surface, hence a curve of degree $8$ and genus $9$.
    \end{itemize}

    \item $D$ is a curve contained in a cubic surface and $|D-H| \neq \emptyset$:
    \begin{itemize}
        \item[(a)] $[D] = 3H$, and $D$ is the complete intersection of $X$ with a cubic surface, hence a curve of degree $12$ and genus $19$.
    \end{itemize}

    \item $D$ is the complete intersection of $X$ with a surface of degree $d \geq 4$. In this case, the class of $D$ is $dH$.
\end{enumerate}
\end{prop}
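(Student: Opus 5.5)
I will follow the same template as the three previous propositions for $\mathcal{F}_5$, $\mathcal{F}_3$ and $\mathcal{F}_2$. Throughout I work in $\mathrm{Pic}(X)=\mathbb{Z}H\oplus\mathbb{Z}C$, where
\[
D\cdot H=4a+6b,\qquad D^2=4a^2+12ab+4b^2 .
\]

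\emph{Item (i).} I apply the conditions of \cite{kwa}*{Thm.1.1} case by case.
\begin{itemize}
\item[$\bullet$] Since $D\cdot H$ is always even, the cases $D\cdot H\in\{1,3,5\}$ are excluded at once.
\item[$\bullet$] For $D^2=-2$, $D\cdot H=2$ we have $2a+3b=1$, so $D^2=(2a+3b)^2-5b^2=1-5b^2$. This equals $-2$ only if $5b^2=3$, which is impossible. Likewise $D^2=0$, $D\cdot H=4$ is impossible.
\item[$\bullet$] The remaining case is $D^2=4$, $D\cdot H=6$. Then $2a+3b=3$ and $9-5b^2=4$, so $b=\pm1$. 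This gives $(a,b)=(0,1)$ or $(3,-1)$, that is, $D=C$ or $D=3H-C$.
\item[$\bullet$] I must still check $|D-H|=|2H-D|=\emptyset$. For $C$: $C-H$ has degree $2$ and self-intersection $4-12+4=-4$, and $2H-C$ has degree $2$. Both are excluded because a very general $X$ contains no conics or lines; this last point needs a short argument via the degree/genus lattice computation.
\item[$\bullet$] By the symmetry $C\leftrightarrow 3H-C$ (the residual of $C_1$ in a cubic section containing it, which has degree $12-6=6$ and genus $3$ by \cite{mdp2}*{III, Prop. 1.2}), both classes consist of curves like $C_1$.
\end{itemize}

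\emph{Item (ii).} Here $X$ is determinantal of type $((1,1,1,1),(2,2,2,2))$, and this pair is autodual. Hence Theorem~\ref{teoprinc} gives only the resolution displayed in the statement. The formulas \eqref{ecuagradoygenero} give $d_D=4k-2$ and $g_D=2k^2-2k-1$, so $D^2=4k^2-4k-4$. Solving $2a+3b=2k-1$ together with $(2a+3b)^2-5b^2=D^2$ gives $5b^2=5$, so $b=\pm1$. This yields $D=(k-2)H+C$ or $D=(k+1)H-C$. Proposition~\ref{propclases} then shows that every curve in these classes is ACM, since it shares a Rao module with the ones realizing the resolution. For the converse, I argue that $\pm C$ is ACM, so $H^1(\oo_X(nH-D))=H^1(\oo_X((n-m)H\mp C))=0$, exactly as in the earlier propositions.

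\emph{Items (iii)--(v).} I run through the small values of $k$ and the resolutions from Theorem~\ref{teoprinc}(2)(iii).
\begin{itemize}
\item[$\bullet$] For $k=0,1$ the degree is negative or equals $2$, which is impossible.
\item[$\bullet$] For $k=2$ we recover $C$ or $3H-C$, already treated in item (i).
\item[$\bullet$] The resolutions of Theorem~\ref{teoprinc}(2)(iii) have the form $0\to\oo(-3)^3\to\oo(-2)^4$ up to shift. These have degree $3$ or are non-ACM-compatible with the lattice, so they are excluded by the degree parity.
\item[$\bullet$] Next I show that no curves of classes other than $2H$ and $3H$ lie in quadrics or cubics. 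The plane-curve check gives only $H$. Any linkage inside a quadric or cubic section would need a residual curve of odd degree or degree $2$, which is excluded by parity and the absence of conics.
\item[$\bullet$] For complete intersections of degrees $(m,n)$ with $m,n\le3$, all cases except $X\cap$surface produce lines, conics, plane cubics, elliptic quartics or curves residual to them. All of these are ruled out by parity of $D\cdot H$ or by the lattice equations.
\end{itemize}

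\emph{Expected main obstacle.} The delicate step is verifying $|C-H|=\emptyset$ and excluding degree-$2$ classes, since $C-H$ is numerically a degree-$2$ class with $(C-H)^2=-4$. I would handle it by noting that an effective divisor of degree $2$ is a conic or a union of lines, and each of these has self-intersection $\ge -2$ componentwise unless it splits. I would then check that the lattice contains no class of degree $1$ (by parity) and no class of degree $2$ with square $-2$ (by the $5b^2$ computation). This forces emptiness.
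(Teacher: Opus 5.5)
Your overall route is the paper's: Watanabe's criterion for (i), Theorem~1 plus the lattice for (ii), then small $k$, case (2)(iii) and complete intersections for (iii)--(v). Your identity $D^2=(2a+3b)^2-5b^2$ and your check that $C-H$ and $2H-C$ are not effective make this cleaner and more complete than the paper's computation, which never verifies $|D-H|=|2H-D|=\emptyset$.

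The genuine error is your treatment of Theorem~1(2)(iii). The shift there is not free. The pair read off from the Betti numbers must be equivalent to $((1,1,1,1),(2,2,2,2))$ with one entry of $\hat b$ equal to $d=4$. That forces a shift by $2$ and gives
\[
0\to\oo_{\PPP}(-4)^{3}\to\oo_{\PPP}(-3)^{4}\to\ii_{D}\to0 .
\]
Your $0\to\oo(-3)^3\to\oo(-2)^4$ cannot resolve an ideal sheaf at all. Hilbert--Burch forces generator and syzygy degrees to have equal sums, so $4\alpha=3\beta$ with $\beta=\alpha+1$, giving $\alpha=3$. The correct resolution yields curves of degree $6$ and genus $3$, which parity does not exclude, and $9-5b^2=4$ gives exactly the classes $C$ and $3H-C$. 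Indeed $C_1$ itself is such a curve: its ideal is generated by four cubics, so $F$ is not a minimal generator, and $C_1$ is not a complete intersection. So your claim that (2)(iii) contributes nothing contradicts the definition of $\mathcal{F}_1$. It also contradicts your own bullet that $k=2$ in (ii) recovers $C$ and $3H-C$, since that $k=2$ resolution is just the non-minimal form of the one above.

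The final list survives, but for a different reason, which is the paper's Case~4. The (2)(iii) curves are the degree-$6$, genus-$3$ curves of item (i). They are absent from items (iii)--(v) only because $|D-H|=\emptyset$ for $D=C$ and $D=3H-C$.

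The same correction applies to your bullet that no classes other than $2H$ and $3H$ lie on quadrics or cubics. As stated it is false, since $C_1$ lies on four cubics. The parity/linkage justification also fails: the residual of $D$ in $X\cap Q$ or $X\cap S$ has even degree $8-\deg D$ or $12-\deg D$, so parity excludes nothing. Restrict the claim to $|D-H|\neq\emptyset$ and derive it from the trichotomy of Theorem~1. For $k\geq 3$ every minimal generator has degree $\geq 4$, so such $D$ lies on no cubic.
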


\begin{proof}
For (i), as before, we follow the conditions of Theorem~1.1 in \cite{kwa}.
\begin{itemize}
    \item $D^{2} = -2$ and $1 \leq D \cdot H \leq 3$.\\
    From the second condition we have $1 \leq 4a + 6b \leq 3$, hence $1 - 6b \leq 4a \leq 3 - 6b$. Since $a$ is an integer, there are two possible cases:
    \begin{itemize}
        \item[*] $b = 4m + 1$ and $a = -1 - 6m$.\\
        From the first equation we obtain
        \begin{align*}
            -2 &= D^{2} = 4a^{2} + 12ab + 4b^{2} \\
               &= 4(-1-6m)^{2} + 12(-1-6m)(4m+1) + 4(4m+1)^{2} \\
               &= 2(-2 - 20m - 76m^{2}),
        \end{align*}
        which has no integer solutions.

        \item[*] $b = 4m + 3$ and $a = -4 - 6m$.\\
        From the first equation we obtain
        \begin{align*}
            -2 &= D^{2} = 4(-4-6m)^{2} + 12(-4-6m)(4m+3) + 4(4m+3)^{2} \\
               &= 2(-22 - 60m - 40m^{2}),
        \end{align*}
        which has no integer solutions.
    \end{itemize}

    \item $D^{2} = 0$ and $3 \leq D \cdot H \leq 4$.\\
    From the second condition we have $3 \leq 4a + 6b \leq 4$, hence $3 - 6b \leq 4a \leq 4 - 6b$. Since $a$ is an integer, we obtain $b = 4m + 2$ and $a = -2 - 6m$. From the first equation we have
    \begin{align*}
        0 &= D^{2} = 4(-2-6m)^{2} + 12(-2-6m)(4m+2) + 4(4m+2)^{2} \\
          &= 2(-8 - 40m - 40m^{2}),
    \end{align*}
    which has no integer solutions.

    \item $D^{2} = 2$ and $D \cdot H = 5$.\\
    From the second condition we have $4a + 6b = 5$. Since $a$ is an integer, we obtain $a = 3m + 1$ and $b = -2m$. From the first equation we have
    \begin{align*}
        2 &= D^{2} = 4(3m+1)^{2} + 12(3m+1)(-2m) + 4(-2m)^{2} \\
          &= 2(2 - 10m^{2}),
    \end{align*}
    which has no integer solutions.

    \item $D^{2} = 4$, $D \cdot H = 6$, and $|D-H| = |2H-D| = \emptyset$.\\
    From the second equation we have $4a + 6b = 6$, hence $2a = 3(1-b)$, which implies $a = 3m$ and $b = 1 - 2m$. From the first equation we obtain
    \begin{align*}
        4 &= D^{2} = 4(3m)^{2} + 12(3m)(1-2m) + 4(1-2m)^{2} \\
          &= 2(2 + 10m - 10m^{2}),
    \end{align*}
    whose integer solutions are $m=0$ and $m=1$. Hence $D=C$ or $D=3H-C$.
\end{itemize}

For (ii), $X$ is a determinantal quartic surface of type $((1,1,1,1),(2,2,2,2))$ and is autodual. Therefore, by Theorem~\ref{teoprinc}, the minimal free resolution of $D$ is
\[
0 \to \oo_{\PPP}(-(k+2))^{4} \to \oo_{\PPP}(-(k+1))^{4} \oplus \oo_{\PPP}(-4) \to \ii_{D} \to 0.
\]
Using formulas~\eqref{ecuagradoygenero}, we obtain $d_{D} = 4k - 2$ and $g_{D} = 2k^{2} - 2k - 1$.\\
Since we will deal with cases where $D$ is contained in a surface of lower degree, we assume $k \geq 3$. Moreover,
\[
4k-2 = D \cdot H = 4a + 6b, \qquad 4k^{2} - 4k - 4 = D^{2} = 4a^{2} + 12ab + 4b^{2},
\]
which implies either $D = (k-2)H + C$ or $D = (k+1)H - C$.

We now verify that the curves in both classes are ACM. Suppose $|D-H| \neq \emptyset$ and $D = (k+1)H - C$. Then
\[
nH - D = (n - (k+1))H - C,
\]
and since $C$ is ACM, we have $H^{1}(\oo_{X}(nH-D)) = 0$ for all $n$, hence $D$ is ACM. The same argument applies to $D = (k-2)H + C$.

Finally, we verify that there are no plane curves other than hyperplane sections. Let $D$ be a plane curve in $X$ with class $aH + bL$. Then its degree is $4a + 6b$, and its genus is $\frac{(4a+6b-1)(4a+6b-2)}{2}$. This equals $\frac{D^{2}-2}{2}$ if and only if $a=1$ and $b=0$, hence $D \in |H|$.

For items (iii) and (iv), we consider the remaining cases of (ii) and those arising from item (2)(iii) of Theorem~\ref{teoprinc}:
\begin{itemize}
    \item[Case 1:] Item (ii) with $k=0$ is not possible since the degree is $-2$.
    \item[Case 2:] Item (ii) with $k=1$ is not possible since it corresponds to the intersection of four planes.
    \item[Case 3:] Item (ii) with $k=2$ gives the ACM curve $C_{1}$ of degree $6$ and genus $3$.
    \item[Case 4:] By Theorem~\ref{teoprinc}, item (2)(iii), for the pair $((1,1,1,1),(2,2,2,2))$, we obtain the resolution
    \[
    0 \to \oo_{\PPP}(-4)^{3} \to \oo_{\PPP}(-3)^{4} \to \ii_{D} \to 0,
    \]
    which corresponds to an ACM curve of degree $6$ and genus $3$.
\end{itemize}

For the last item, as in the previous propositions, complete intersections of surfaces of degrees $m,n \leq 3$ are not possible. Therefore, the only remaining cases are complete intersections of $X$ with a surface of degree $d \geq 4$, in which case the class is $dH$.\\
\end{proof}

\section{A generalization} \label{sec::4}
In this section, we present a generalization of the previous results to higher dimension.

\begin{defi}
Let $(\hat{a},\hat{b})$ be an admissible pair of degree $d$ and length $t$. A hypersurface
\[
X=\{F=0\}\subset \PP^{n}
\]
is called a \textit{weak determinantal hypersurface} of type $(\hat{a},\hat{b})$ if
\[
F=\det(S)
\]
for some matrix $S=(m_{ij})$, where $m_{ij}$ is a homogeneous polynomial of degree $b_{j}-a_{i}$. In the case $a_{i}\geq b_{j}$, we assume that $m_{ij}=0$.
\end{defi}

\begin{prop}
Let $X=V(F)\subseteq \PP^{n}$ be a hypersurface of degree $d$. If there exists an ACM closed subvariety
\[
Y\subseteq X
\]
of codimension one in $X$ that is not a complete intersection, then $X$ is a weak determinantal hypersurface of type $(\hat{a},\hat{b})$ for some admissible pair $(\hat{a},\hat{b})$ of degree $d$ and length $t$.
\end{prop}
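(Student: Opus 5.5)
The argument is the same as for Proposition~\ref{propnotdet}, with $\PPP$ replaced by $\PP^{n}$. The one new input is that the Hilbert--Burch theorem still applies. Let $Y\subseteq X$ be ACM of codimension one in $X$. Then $Y$ has codimension two in $\PP^{n}$, and its homogeneous coordinate ring $R/I_Y$, with $R=\mathds{C}[x_0,\ldots,x_n]$, is Cohen--Macaulay of codimension two. By Auslander--Buchsbaum, $\mathrm{pd}_R(R/I_Y)=2$, so the saturated ideal has a minimal free resolution
\[
0\to \bigoplus_{j=1}^{s-1}R(-b_j)\xrightarrow{\ M\ } \bigoplus_{i=1}^{s}R(-a_i)\to I_Y\to 0 .
\]
By Hilbert--Burch, the minimal generators $F_i$ of $I_Y$ are, up to sign, the maximal minors of the $s\times(s-1)$ matrix $M$, and the entries satisfy $\deg m_{ij}=b_j-a_i$, with $m_{ij}=0$ when $b_j\le a_i$ by minimality. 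Since $Y$ is not a complete intersection, $s\geq3$. This is the analogue of \cite{har3}*{Prop. 8.7} for codimension-two ACM subschemes of $\PP^{n}$, and it is the only place where the dimension enters.

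We then split into the same two cases as before, according to whether $F$ is a minimal generator of $I_Y$.

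\textbf{Case 1: $F$ is a minimal generator.} Take $F=F_{i_0}$ with $a_{i_0}=d$. Then $F$ is the maximal minor of $M$ obtained by deleting row $i_0$. Set $t=s-1\geq 2$ and let $\hat a$ be the remaining $a_i$ and $\hat b=(b_1,\ldots,b_t)$, both ordered increasingly; reordering rows and columns only changes the sign of the determinant. We check the axioms of a weak admissible pair:
\begin{itemize}
\item the degree is $\sum(b_j-a_j)=a_{i_0}=d$, from the standard Hilbert--Burch degree identity $\sum b_j=\sum_i a_i$ (since $\deg F_{i}=\sum b_j-\sum_{k\neq i}a_k$);
\item $a_i<b_i$ for each $i$, which is the standard minimality condition that diagonal-type entries of a minimal Hilbert--Burch matrix have positive degree (otherwise a block of zeros forces a minor to vanish, or the resolution is nonminimal).
\end{itemize}
Thus $X$ is weak determinantal of type $(\hat a,\hat b)$.

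\textbf{Case 2: $F$ is not a minimal generator.} Write $F=\sum_i(-1)^i c_iF_i$ with homogeneous $c_i$ of degree $d-a_i$. Append the column $(c_i)$ to $M$ to get a square matrix $S$ with $\det S=\pm F$. Set $b_t=d$, $t=s$, and insert $d$ into the sorted $\hat b$. As in the original proof, if some $a_i=d$ and $c_i\neq0$, then $c_i$ is a nonzero constant, and we can exchange $F_i$ for $F$ in the minimal generating set. That makes $F$ a minimal generator, a contradiction. Hence $c_i=0$ whenever $a_i\geq d$, which is exactly the vanishing convention in the definition. The degree is again $\sum b_j-\sum a_i=d$.

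\textbf{Main obstacle.} The delicate point is verifying condition (iii), $a_i<b_i$ after sorting, in both cases. I would argue it from the determinant being nonzero. If $a_k\ge b_k$ for some $k$, then by monotonicity the submatrix with rows $i\geq k$ and columns $j\leq k$ vanishes. That block is $(t-k+1)\times k$, and its dimensions sum to $t+1>t$, so the determinant is zero by the Frobenius--König type argument. This contradicts $\det S=\pm F\neq0$. Hence $a_i<b_i$ for all $i$, which completes the proof. The remaining checks are routine bookkeeping identical to Proposition~\ref{propnotdet}.
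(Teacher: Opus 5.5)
Your proposal is correct and follows the paper's route. The paper's proof simply invokes the Hilbert--Burch theorem for codimension-two ACM subschemes of $\PP^{n}$ and then repeats the two-case argument of Proposition~\ref{propnotdet}, splitting on whether $F$ is a minimal generator and, in Case 2, appending a column and exchanging generators, exactly as you do. Your explicit checks of the degree identity $\sum_j b_j=\sum_i a_i$ and of condition (iii) $a_i<b_i$, via the Frobenius--K\"onig zero-block argument, are details the paper leaves implicit, and they are sound.
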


\begin{proof}
Since there is a generalization of the Hilbert--Burch Theorem for subvarieties of codimension $2$ in $\PP^{n}$ (see \cite{har3}*{Prop. 8.7}), the proof is identical to the surface case in $\PPP$.\\
\end{proof}

As before, we obtain a classification theorem for ACM closed subvarieties of codimension one on a hypersurface of degree $d$ in $\PP^{n}$. The proof follows exactly the same arguments as in the surface case.

\begin{thm}
Let $X=V(F)$ be a hypersurface of degree $d$ in $\PP^{n}$, and let $Y\subseteq X$ be a closed subvariety of codimension one in $X$. Then $Y$ is ACM if and only if one of the following cases occurs:
\begin{enumerate}
    \item The hypersurface $X$ is not weak determinantal. In this case, $Y$ is a complete intersection.

    \item The hypersurface $X$ is weak determinantal of type $\{(\hat{a}_{i},\hat{b}_{i})\}_{i\in I}$, and only of these types. Then $Y$ is one of the following:
    \begin{enumerate}[label=(\roman*)]
        \item $Y$ is a complete intersection.

        \item If $Y$ is not a complete intersection and $F$ is a minimal generator of the ideal $\ii_{Y}$, then $Y$ has minimal free resolution
        \[
        0 \to \bigoplus_{j=1}^{t} \oo_{\PP^{n}}(-(b_{j}+k))
        \to \bigoplus_{i=1}^{t} \oo_{\PP^{n}}(-(a_{i}+k)) \oplus \oo_{\PP^{n}}(-d)
        \to \ii_{Y} \to 0,
        \]
        with $\hat{a}_{j}=(a_{1},\ldots,a_{t})$ and $\hat{b}_{j}=(b_{1},\ldots,b_{t})$ for some $j\in I$ and some $k\in\mathds{Z}$.

        \item If $Y$ is not a complete intersection and $F$ is not a minimal generator of the ideal $\ii_{Y}$, then $Y$ has minimal free resolution
        \[
        0 \to \bigoplus_{\substack{i=1\\ i\neq j_{0}}}^{t}
        \oo_{\PP^{n}}(-d+b_{j_{0}}-b_{i})
        \to \bigoplus_{i=1}^{t}
        \oo_{\PP^{n}}(-d+b_{j_{0}}-a_{i})
        \to \ii_{Y} \to 0,
        \]
        with
        \[
        \hat{a}_{j}=(a_{1}+k,\ldots,a_{t}+k), \quad
        \hat{b}_{j}=(b_{1}+k,\ldots,b_{j_{0}-1}+k,d,b_{j_{0}+1}+k,\ldots,b_{t}+k),
        \]
        for some $j\in I$, $j_{0}\in\{1,\ldots,t\}$, and some $k\in\mathds{Z}$.
    \end{enumerate}
\end{enumerate}
\end{thm}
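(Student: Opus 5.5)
The plan is to mirror the proof of Theorem 1, replacing $\PPP$ by $\PP^{n}$ and curves by codimension-two subschemes of $\PP^{n}$ (codimension one in $X$). Everything rests on one input: for an ACM subscheme $Y\subset\PP^{n}$ of codimension two, the homogeneous ideal has projective dimension one. This follows from the Auslander--Buchsbaum formula, since the coordinate ring is Cohen--Macaulay of depth $n-1$. The Hilbert--Burch theorem (\cite{har3}*{Prop. 8.7}) then gives a minimal resolution
\[
0\to\bigoplus_{j=1}^{s}\oo_{\PP^{n}}(-\beta_{j})\to\bigoplus_{i=1}^{s+1}\oo_{\PP^{n}}(-\alpha_{i})\to\ii_{Y}\to0,
\]
with the generators equal to the maximal minors of the presentation matrix. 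Conversely, any subscheme with such a resolution has $H^{i}_{*}(\ii_{Y})=0$ for $1\le i\le n-2$, hence is ACM. This gives the ``if'' direction: in every listed case, the displayed resolution of length one, or the Koszul resolution of a complete intersection, forces $Y$ to be ACM.

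For the ``only if'' direction, I would first dispose of item (1). It is the contrapositive of the preceding proposition: if $Y$ is ACM and not a complete intersection, then $X$ is weak determinantal. So for $X$ not weak determinantal, every ACM $Y\subseteq X$ of codimension one is a complete intersection.

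For item (2), I would split into the three cases of the proof of Theorem 1.

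\textbf{$F$ a minimal generator.} I would take $F=\alpha_{0}$-generator with $\alpha_{0}=d$, and let $m$ be the $t\times t$ submatrix of the Hilbert--Burch matrix obtained by deleting the row corresponding to $F$. Ordering the remaining twists as $a_{1}\le\dots\le a_{t}$ and $b_{1}\le\dots\le b_{t}$, the entries satisfy $\deg m_{ij}=b_{j}-a_{i}$, and the entry vanishes when $b_{j}\le a_{i}$ by minimality. The degree equation $\sum(b_{i}-a_{i})=d$ comes from comparing the top degree of $\det m=F$. Hence $(\hat a,\hat b)$ is one of the types of $X$, up to the shift $k$, and the resolution has the stated shape.

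\textbf{$F$ not a minimal generator.} I would write $F=\sum(-1)^{i}m_{it}F_{i}$, adjoin this column to the $t\times(t-1)$ Hilbert--Burch matrix, and insert $d$ into $\hat b$ at position $j_{0}$. Solving $d=b_{j_{0}}+k$ for the shift then rewrites the twists as $-d+b_{j_{0}}-a_{i}$ and $-d+b_{j_{0}}-b_{i}$.

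The one point that needs checking is that the adjoined column has $m_{it}=0$ whenever $a_{i}\ge d$, so that the matrix really is of the weak determinantal shape. For $a_{i}>d$ this holds by degree, and the case $a_{i}=d$ follows by the minimality argument from the proof of Proposition \ref{propnotdet}. I would also confirm that the condition $a_{i}<b_{i}$ for the new pair follows from the nonvanishing of $\det S$: a zero diagonal-block pattern would force the determinant to vanish. I expect this verification that the enlarged pair is genuinely weak admissible to be the main obstacle, and I would handle it by reordering $\hat b$ and using that $\det S=\pm F\neq0$.

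The remaining steps are identical to the curve case. Degree, dimension and the Hilbert--Burch theorem do not depend on $n$.
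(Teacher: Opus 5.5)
Your proposal is correct and takes essentially the paper's route: the paper only says the argument is identical to Theorem 1 and Proposition \ref{propnotdet} once Hilbert--Burch is available in codimension two in $\PP^{n}$, and you carry out that transfer (projective dimension one via Auslander--Buchsbaum, the split according to whether $F$ is a minimal generator, and the minimality argument forcing $m_{it}=0$ when $a_i=d$). Your extra checks fill in points the paper leaves implicit: the converse via vanishing of $H^{i}_{*}(\ii_Y)$ from a length-one resolution, and the zero-block argument from $\det\neq 0$ giving $a_i<b_i$, which applies verbatim in the case where $F$ is a minimal generator as well.
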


\end{document}